\documentclass[12pt]{amsart}
\usepackage[utf8]{inputenc}

\makeatletter
\def\@settitle{\begin{center}%
		\baselineskip14\p@\relax
		\normalfont\Large
		\@title
	\end{center}%
}
\makeatletter
\def\@settitle{\begin{center}%
		\baselineskip14\p@\relax
		\normalfont\Large
		\@title
	\end{center}%
}
\makeatother

\usepackage{graphicx,epsfig}

\usepackage{bbm}
\usepackage[title]{appendix}
\usepackage{esvect}
\usepackage{tikz-cd}
\usepackage{fancyhdr}
\usepackage{amsfonts}
\usepackage{amsmath}
\usepackage{amssymb}
\usepackage{mathabx}
\usepackage{amsthm}
\usepackage{tikz-cd}
\usepackage{parskip}
\usepackage{times} 

\makeatletter
\newcommand\xleftrightarrow[2][]{%
  \ext@arrow 9999{\longleftrightarrowfill@}{#1}{#2}}
\newcommand\longleftrightarrowfill@{%
  \arrowfill@\leftarrow\relbar\rightarrow}
\makeatother

\usepackage{mathrsfs}
\usepackage{scalerel,amssymb}

\usepackage{amsmath,amssymb,latexsym, amsfonts, amscd, amsthm, xy}
\input{xy}
\xyoption{all}

\input{xy}
\xyoption{all}

\makeindex 

=650pt \headheight = 8pt \headsep = 10pt

\usepackage{fancyhdr}
\usepackage{amsfonts}
\usepackage{amsmath}
\usepackage{amssymb}

\usepackage{amsthm}
\usepackage{tikz}
\usepackage{tikz-cd}
\usepackage{parskip}
\usepackage{times} 

\usepackage[
bibstyle=alphabetic,
citestyle=alphabetic,
hyperref=true,
backref=false]{biblatex}
\usepackage{hyperref}
\hypersetup{pdftoolbar=true, pdftitle={GKform}, pdffitwindow=true, colorlinks=true, citecolor=green, filecolor=black, linkcolor=blue, urlcolor=blue, hypertexnames=false}
\usepackage{cleveref}
\makeindex 

\renewcommand{\rm}{\mathrm}

\newcommand{\Z}{\mathbb{Z}}
\newcommand{\Q}{\mathbb{Q}}
\newcommand{\R}{\mathbb{R}}

\newcommand{\F}{\mathbb{F}}

\newcommand{\PP}{\mathbb{P}}

\newcommand{\PGL}{\mathrm{PGL}}

\newcommand{\GL}{\mathrm{GL}}
\newcommand{\St}{\mathrm{St}}
\newcommand{\OS}{\mathrm{OS}}
\newcommand{\Tor}{\mathrm{Tor}}
\newcommand{\Ext}{\mathrm{Ext}}

\newcommand{\Hom}{\mathrm{Hom}}

\DeclareMathSymbol{\mathinvertedexclamationmark}{\mathclose}{operators}{'074}
\DeclareMathSymbol{\mathexclamationmark}{\mathclose}{operators}{'041}

\makeatletter
\newcommand{\raisedmathinvertedexclamationmark}{%
  \mathclose{\mathpalette\raised@mathinvertedexclamationmark\relax}%
}
\newcommand{\raised@mathinvertedexclamationmark}[2]{%
  \raisebox{\depth}{$\m@th#1\mathinvertedexclamationmark$}%
}
\begingroup\lccode`~=`! \lowercase{\endgroup
  \def~}{\@ifnextchar`{\raisedmathinvertedexclamationmark\@gobble}{\mathexclamationmark}}
\mathcode`!="8000
\makeatother

=650pt \headheight = 8pt \headsep = 10pt

\newtheorem{thm}{Theorem}[section]
\newtheorem{prop}[thm]{Proposition}
\newtheorem{lem}[thm]{Lemma}
\newtheorem{cor}[thm]{Corollary}

\newtheorem{defn}[thm]{Definition}
\theoremstyle{definition}
\newtheorem{rem}[thm]{Remark}

\numberwithin{equation}{section}
\title[Presenting restricted Steinberg modules]{Presenting restricted Steinberg modules of general linear groups}
\author{Peter Xu}

\usepackage{setspace}
\begin{document}

        \makeatletter
\@setabstract
\makeatother
\maketitle

    \begin{abstract}
        We give generators and presentations for various local restrictions of Steinberg modules over fields and relate them to \emph{partial} Borel--Serre compactifications of locally symmetric spaces in the case of number fields, extending the existing theory of partial modular symbols for $\GL_2(\Q)$. Along the way, we clarify the relationship between ``circuit'' and ``Bykovskii''-type presentations for such modules. In an appendix, we relate the existence of such presentations to Koszulity properties of the Steinberg $\mathrm{VB}$-algebra.
    \end{abstract}

    \tableofcontents

    \section{Introduction}

    The Steinberg representation $\St_G$ attached to an algebraic group $G$ over a field $F$ is a ubiquitous construction in number theory and representation theory, naturally appearing in a wide array of cohomological and geometric phenomena. Correspondingly, it has various definitions, the most common being as the top-dimensional reduced homology of the spherical \emph{Tits building} $\widetilde{H}_{r-1}(T_G)$, where $r$ is the $F$-rank of $G$ modulo the center, and $T_G$ is a simplicial complex built from the poset of parabolic subgroups of $G$. The celebrated \emph{Solomon--Tits theorem} says that this homology group totally captures the homotopy type of this building, in the sense that $T_G$ is homotopy equivalent to a bouquet of $(r-1)$-spheres.
    
    For general linear groups, we may write $\St_n(F)$ for $\St_{\GL_n/F}$ (and similarly $T_n(F)$ for the building); for a fixed free $F$-module\footnote{Here, we use language nonspecific to fields as a few instances of Steinberg modules for more general rings will occur.} $M$, we may also write $\St(M)$. In this case, parabolics are the same as flags of subspaces, and one can give a concrete and simple description of $T_n(F)$ as the simplicial complex whose $k$-simplices are flags
    \begin{equation} \label{eq:tits}
    0\subsetneq V_1 \subsetneq \ldots \subsetneq V_{k} \subsetneq F^n
    \end{equation}
    with the faces corresponding to forgetting intermediate subspaces; one sees this is indeed $(n-2)$-dimensional, one less than the $F$-rank of $\PGL_n$. 
    
    Steinberg modules can often be presented very explicitly, especially in the general linear setting. One important presentation was described by Ash--Rudolph \cite{AR}, which they called ``modular symbols,'' as for $\GL_2/\Q$ it gives classical modular symbols parameterizing the geometry of geodesics between cusps. This presentation has as generators by symbols $[\ell_1,\ldots, \ell_n]$ for independent lines $\ell_1,\ldots, \ell_n\subset F^n$, antisymmetric in the lines and subject to the additional ``cyclic'' relation
        \[
        \sum_{i=1}^{n+1} (-1)^i[\ell_1,\ldots, \hat{\ell}_i,\ldots, \ell_{n+1}]=0.
        \]
    for any tuple of $n+1$ lines; here, any non-independent symbols are set to zero. In terms of the geometry of the Tits building of $\GL_n$, these generators come from a certain set of canonical cycles called ``apartments'' coming from simple linear algebraic data, corresponding to a direct sum decomposition of $V$ into lines in the general linear setting. 
    
    In some cases, one has smaller presentations: for instance, given an integral subring $R\subset F$, one may ask if \emph{unimodular} apartment classes, whose lines span a fixed $R$-lattice in $F^n$, already suffice. (More generally, even for non-domains $R$, we may speak of \emph{$R$-unimodular} tuples of lines, whose generators form $R$-bases.)
    
    In a different direction of simplification, one may ask if only \emph{some} of the cyclic relations above are necessary: for instance, the ``quadratic relations'' wherein there is a linear relation between some triple $\ell_i,\ell_j,\ell_k$. (It turns out that this follows readily from the cyclic presentation in general linear setting, if one does not ask for unimodularity.)

    These simplifications, when they hold, can lead to much more tractable computations of modular symbols at finite level; when $n=2$ and $R=\Z$, the unimodular simplification was essentially used by Manin \cite{Ma} to give his eponymous ``Manin symbol'' formalism for modular symbols for $\GL_2/\Q$ at finite level.

    \subsection{Arithmetic group cohomology and modular symbols}
    
    Our primary number-theoretic interest in these objects is linked to arithmetic group (co)homology: Borel--Serre proved \cite{BS} that if $G$ is reductive with anisotropic center over $\Q$, $\St_{G}$ is a \emph{Bieri--Eckmann dualizing module} for arithmetic subgroups of $G(\Q)$. Indeed, let $X_G$ be the archimedean symmetric space attached to $G$; it admits a left action of $G(\Q)$ with isolated finite stabilizers. 
    \begin{thm} \label{thm:bs}
        Let $S$ be a set of finite places of $\Q$, and $\Gamma\le G(\Q)$ an arithmetic subgroup. If one takes coefficients in which all orders of torsion elements of $\Gamma$ are invertible, then we have canonical isomorphisms of homology groups
        \begin{equation} \label{eq:bs}
            H_\bullet(\Gamma,\St_G) \cong H_\bullet^{BM}(\Gamma\backslash X_G).
        \end{equation}
    \end{thm}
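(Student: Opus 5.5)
The plan is to pass through the Borel--Serre bordification $\overline{X}_G$, which is a contractible manifold with corners. Its interior is $X_G$, and its boundary $\partial\overline{X}_G$ is $G(\Q)$-equivariantly homotopy equivalent to the Tits building $T_G$. Borel--Serre show that $\Gamma$ acts properly on $\overline{X}_G$ with cocompact quotient. Since $X_G$ is contractible, the long exact sequence of the pair $(\overline{X}_G,\partial\overline{X}_G)$ gives $\widetilde{H}_{i}(\partial\overline{X}_G)\cong H_{i+1}(\overline{X}_G,\partial\overline{X}_G)$. By Solomon--Tits, $T_G$ is a bouquet of $(r-1)$-spheres, so this relative homology is concentrated in degree $r$, where it equals $\St_G$. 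So $C_\bullet(\overline{X}_G,\partial\overline{X}_G)$ is, up to a shift by $r$, a resolution of $\St_G$ by $\Gamma$-modules.

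The key steps, in order, are as follows.

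\textbf{(1)} Choose a $\Gamma$-equivariant cell structure on $\overline{X}_G$ with $\partial\overline{X}_G$ a subcomplex. Cell stabilizers are finite, so over coefficients $k$ in which all torsion orders are invertible, each $C_i(\overline{X}_G,\partial\overline{X}_G;k)$ is a projective $k[\Gamma]$-module. Each stabilizer-induced module $k[\Gamma/\Gamma_\sigma]\otimes k_\sigma$ is a summand of a free module, because averaging over $\Gamma_\sigma$ is possible.

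\textbf{(2)} By the previous paragraph, this is a projective resolution of $\St_G$ shifted by $r$. Hence
\[H_\bullet(\Gamma,\St_G)\cong H_{\bullet+r}\big(C_\bullet(\overline{X}_G,\partial\overline{X}_G)\otimes_\Gamma k\big)=H_{\bullet+r}(\Gamma\backslash\overline{X}_G,\Gamma\backslash\partial\overline{X}_G;k).\]
Here we use that for finite stabilizers with invertible orders, coinvariants of the chains compute the homology of the quotient pair. (Equivalently, one can compare with the Borel construction, whose fibers are $k$-acyclic.)

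\textbf{(3)} The quotient $\Gamma\backslash\overline{X}_G$ is compact, with interior $\Gamma\backslash X_G$ and boundary collar $\Gamma\backslash\partial\overline{X}_G$. The homology of a compact space relative to its boundary equals the Borel--Moore homology of its interior, i.e. $H_\ast(\Gamma\backslash\overline{X}_G,\Gamma\backslash\partial\overline{X}_G)\cong H^{BM}_\ast(\Gamma\backslash X_G)$. This yields \eqref{eq:bs}, with the degree shift by $r$ built into the indexing. Canonicity comes from the fact that each map used is natural.

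\textbf{(4)} Also check that $G$ having anisotropic center makes $\Gamma\backslash\overline{X}_G$ compact, since otherwise split central tori give noncompact directions.

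I expect the main obstacle to be the geometric input. This consists of constructing the bordification, showing that $\partial\overline{X}_G$ is homotopy equivalent to $T_G$ (via the nerve of the cover by boundary faces $e(P)$, which are contractible and indexed by rational parabolics), and proving that the $\Gamma$-action is proper and cocompact. I would cite Borel--Serre for all of this rather than reprove it. The remaining algebra, namely the projectivity of the cellular chains and the coinvariant computation, is routine once torsion orders are inverted.
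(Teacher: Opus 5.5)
Your proposal is correct and follows essentially the same route as the paper. Both arguments use the Borel--Serre bordification with boundary homotopy equivalent to $T_G$, and both note that the relative homology of $(\overline{X}_G,\partial\overline{X}_G)$ is $\St_G$ in degree $r$ and vanishes elsewhere. The paper's ``spectral sequence computation'' (the collapsing sequence $H_p(\Gamma, H_q(\overline{X}_G,\partial\overline{X}_G))\Rightarrow H_{p+q}(\Gamma\backslash\overline{X}_G,\Gamma\backslash\partial\overline{X}_G)$) is exactly your observation that the relative chains form a shifted complex of projectives once stabilizer orders are inverted. Your explicit degree shift by $r$ is the right reading of the isomorphism, and matches the paper's use of $H_{n-1}$ of the relative pair for $\GL_n$.
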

    One can give a proof\footnote{This is not \emph{precisely} the proof given originally by Borel--Serre, which exploited the theory of Bieri--Eckmann duality groups more directly and so does not obviously generalize. In essence, however, the underlying geometric picture is not very different.} of \eqref{eq:bs} by identifying the boundary of the \emph{Borel--Serre compactification} $\overline{X}_{G}$ of the archimedean symmetric space as having the homotopy type of $T_G$, and then descending (by a spectral sequence computation) to the locally symmetric space for $\Gamma$ whose cohomology yields group cohomology, up to the torsion mentioned. We will recall the construction of this compactification and associated results in Section \ref{section:bs}. Thanks to \eqref{eq:bs}, the representation theory of $\St_G$ thus becomes an indispensable tool in understanding the cohomology of $\Gamma$, and especially in proving vanishing results.
    
    We generalize the above proof to \emph{partial} Borel--Serre compactifications of $X_G$ in the case $G=\mathrm{Res}^F_\Q\,\GL_n$,\footnote{In fact, our methods work for general reductive groups $G$, but it is only in the general linear case that we have some interesting partial compactifications in mind, and applications for the resulting submodules of $\St_G$.} in terms of certain ``restricted'' Steinberg modules. By ``partial,'' we mean that we will only include boundary components corresponding to \emph{some} parabolic subgroups, coming from linear algebraic conditions at some finite set of nonarchimedean places of $F$; consequently, we obtain comparison results analogous to \eqref{eq:bs} for arithmetic subgroups of $G$ fixing these local conditions:

    \begin{thm} \label{thm:a}
        Let $F$ be a number field whose ring of integers $\mathcal{O}_F$ has proper prime and coprime ideals $I$ and $J$ of residue characteristic not equal to $2$, and let $S$ and $T$ be subspaces of $(\mathcal{O}_F/I)^n$ and $\mathcal{O}_F/J)^n$ respectively. Let $\Gamma$ be a torsion-free arithmetic subgroup of $\GL_n(F)$ stabilizing $S$ and $T$. Let ${}^S\mathfrak{C}$, respectively ${}_T\mathfrak{C}$, be the set of parabolic subgroups of $\GL_n(F)$ whose stabilized flag consists of subspaces not contained in $S$ modulo $I$, respectively does not contain $T$ modulo $J$. Then the relative homology of the partially compactified locally symmetric space (as defined in Section \ref{section:bs}), for the cusps $\mathfrak{C}={}^S\mathfrak{C}$ or ${}_T\mathfrak{C}$
        \[
        H_{n-1}(\Gamma \backslash \overline{X}_{\GL_n(F)}^{ \mathfrak{C}}, \Gamma \backslash \partial \overline{X}_{\GL_n(F)}^{\mathfrak{C}}, \Z)
        \]
        is canonically identified with the $\Gamma$-coinvariants of a certain $\Gamma$-module ${}^S\St_n(F)$, respectively ${}_T\St_n(F)$, with the following generators and relations:
        \begin{itemize}
            \item ${}^S\St_n(F)$ is generated by symbols $[\ell_1,\ldots, \ell_n]$ indexed by tuples of lines in $F^n$ none of whose reductions modulo $I$ are contained in $S$, subject to the relations in Definition \ref{def:byk}.
            \item ${}_T\St_n(F)$ is generated by symbols $[\ell_1,\ldots, \ell_n]$ indexed by tuples of lines in $F^n$ none of whose spanned hyperplanes modulo $J$ are contained in $T$, subject to the relations in Definition \ref{def:byk}.
        \end{itemize}
        Moreover, in either case, restricting to the symbols $[\ell_1,\ldots, \ell_n]$ which are $R$-unimodular, for any localization $R$ of $\mathcal{O}_F$ which is a PID (if $F$ has a real place) or is $R$ is semi-local (otherwise), still suffices to generate the module in question.
    \end{thm}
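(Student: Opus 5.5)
The plan has three parts, carried out in order: a geometric identification of the relative homology with a homology group of a subcomplex of the Tits building, an algebraic presentation of that homology group, and a unimodularity reduction.

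\textbf{Step 1: reduce to the building.} Write $T^{\mathfrak{C}}$ for the subcomplex of the Tits building $T_n(F)$ consisting of flags all of whose parabolics lie in $\mathfrak{C}$. The boundary $\partial\overline{X}^{\mathfrak{C}}_{\GL_n(F)}$ is a union of boundary faces $e(P)$ for $P\in\mathfrak{C}$, each contractible, glued along the poset structure. So, as for the full Borel--Serre compactification, it has the homotopy type of $T^{\mathfrak{C}}$. The partial compactification $\overline{X}^{\mathfrak{C}}_{\GL_n(F)}$ is still contractible, since it deformation retracts onto $X_G$ via the geodesic action. The long exact sequence of the pair then gives
\[
H_{n-1}(\overline{X}^{\mathfrak{C}},\partial\overline{X}^{\mathfrak{C}})\cong \widetilde{H}_{n-2}(T^{\mathfrak{C}}).
\]
Since $\Gamma$ is torsion-free, it acts freely and properly, so we may pass to the quotient. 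Compare the relative chains of the pair on the quotient with $\Gamma$-coinvariants of the relative chains upstairs. The degree $n-1$ group is then the coinvariants of $\widetilde H_{n-2}(T^{\mathfrak{C}})$, provided $T^{\mathfrak{C}}$ is $(n-2)$-spherical, i.e.\ its reduced homology is concentrated in degree $n-2$. Indeed, the relative complex upstairs is then a free $\Z\Gamma$-resolution shifted so that its top homology sits in degree $n-1$, and right-exactness of coinvariants yields the identification.

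\textbf{Step 2: sphericity and presentation.} This is the main obstacle. We need $T^{\mathfrak{C}}$ to be Cohen--Macaulay of dimension $n-2$, and we need $\widetilde H_{n-2}(T^{\mathfrak{C}})={}^S\St_n(F)$ (respectively ${}_T\St_n(F)$) to have the asserted presentation.

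For ${}^S\mathfrak{C}$, the relevant poset consists of subspaces $V\subset F^n$ whose reduction modulo $I$ (the saturation of $V\cap\mathcal{O}^n$) is not contained in $S$. This condition is upward closed. I would run a Quillen fiber/link argument, as in the proof of Solomon--Tits, by filtering with respect to a fixed line $\ell_0$ whose reduction is not in $S$. Adding subspaces not containing $\ell_0$ one at a time, the links are joins of smaller restricted buildings of the same type. These are spherical by induction on $n$, and the hypothesis on $I$ is used to find enough "good'' complements. The case ${}_T\mathfrak{C}$ is dual: pass to $F^{n\vee}$ and replace the condition on subspaces by the condition on hyperplanes.

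Apartment classes $[\ell_1,\dots,\ell_n]$ with all $\ell_i\notin S\bmod I$ lie in $T^{\mathfrak{C}}$. To present the top homology, I would use the standard comparison with the complex of partial bases (the ``Bykovskii''-style argument). Build the complex $B^{\mathfrak{C}}$ whose simplices are admissible independent sets of lines. Show that it is $(n-1)$-connected enough, so that $\widetilde H_{n-2}(T^{\mathfrak{C}})$ is the cokernel of $C_{n}\to C_{n-1}$ in an augmented chain complex. The relations of Definition~\ref{def:byk} are exactly the images of the $(n+1)$-simplices. The restriction that the residue characteristic is not $2$ should enter here: it is needed to find, given $n+1$ admissible lines, intermediate admissible lines realizing the relations, since over $\F_2$ avoiding $S$ can fail for sums.

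\textbf{Step 3: unimodularity.} Given an admissible symbol, I would run the Euclidean-style reduction used for Manin symbols: repeatedly insert an auxiliary admissible line and apply a relation to decrease a height or discriminant $[\mathcal{O}^n:\sum R\ell_i]$ at the localization $R$. If $R$ is a PID with a real place, this uses the existence of $R$-unimodular lifts with prescribed reduction mod $I$ or $J$, which is strong approximation for $\mathrm{SL}_n$ combined with the Chinese remainder theorem using coprimality of $I$ and $J$. If $R$ is semilocal, lines can be modified locally at finitely many primes simultaneously, so admissible unimodular completions exist directly. I would first try the naive induction on the index, checking at each step that the inserted line can be chosen to avoid $S$ modulo $I$ (respectively to keep the spanned hyperplanes off $T$ modulo $J$).
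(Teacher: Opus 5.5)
Your Step 1 is essentially the paper's argument: the boundary is identified with the restricted building via the Borel--Serre nerve, you use the long exact sequence of the pair, and you pass to coinvariants using freeness of the $\Gamma$-action. Your cone-off argument for sphericity is a valid alternative to the paper's appeal to Bj\"orner. The chain $V\le V+\ell_0\ge \ell_0$ contracts everything except the hyperplanes $H$ not containing $\ell_0$ with $\bar H\not\subseteq S$. Their links are restricted buildings of $H$, and no hypothesis on $I$ is needed. The gaps are in the presentation and in the unimodularity.

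\textbf{The presentation.} You push everything into ``show that $B^{\mathfrak{C}}$ is $(n-1)$-connected enough'', with no mechanism. For the Church--Putman augmented partial-basis complex, this connectivity is the main technical result of their proof of Bykovskii's presentation over $\Z$. For ${}_T\St_n(F)$, admissibility constrains the spans of all proper subtuples rather than individual lines, so it is not even clear which complex to build.

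The idea you are missing is that ${}^ST_n(F)$ is the order complex of the lattice of flats of the matroid ${}^S\mathcal{P}_n(F)$ of lines whose reduction avoids $S$. This holds because every subspace not contained in $S$ mod $I$ is spanned by such lines. Orlik--Solomon and Bj\"orner then identify ${}^S\St_n(F)$ with the top-degree Orlik--Solomon algebra, whose relations are exactly the circuit relations among admissible lines. So a circuit-type presentation comes for free.

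What remains is combinatorial. After rescaling, a circuit relation is that of $v_1+\cdots+v_k,v_1,\ldots,v_n$, and one reduces it to three-term relations by induction on $k$ through the intermediate vector $v_1+v_2$. This needs $v_1+v_2$ to be admissible after reordering, which is exactly where $2\in(R/I)^\times$ enters. For $k\ge 3$, if $v_1+v_2$, $v_1+v_3$, $v_2+v_3$ all lay in $S$ mod $I$, then so would $2v_1$. The case ${}_T\St_n(F)$ follows by the duality $\delta$. This map sends Bykovskii relations to Bykovskii relations but does not send circuit relations to circuit relations, so one must reach the three-term form before dualizing.

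\textbf{Unimodularity.} An index-decreasing descent is exactly what is unavailable beyond Euclidean-type rings. For a general PID, nothing guarantees an auxiliary line that lowers $[\mathcal{O}^n:\sum R\ell_i]$. As a sanity check, your descent never uses the real place, since strong approximation for $\mathrm{SL}_n$ holds over every number field. If it worked, it would apply verbatim with no restriction and give integral generation over non-Euclidean imaginary quadratic PIDs such as $\Z[(1+\sqrt{-19})/2]$. There it is known to fail (Miller--Patzt--Wilson--Yasaki).

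The paper argues topologically instead. It shows that the complex ${}^SB_n(R)$ of partial $R$-bases made of admissible lines is Cohen--Macaulay, bootstrapping from Scalamandre's theorem for the unrestricted $B_n(R)$. The two ingredients are:
\begin{enumerate}
\item a Hatcher--Vogtmann-type deletion lemma, applied by downward induction to $\mathrm{lk}_B(\tau)\cap{}^SB_n(R)$;
\item purity, obtained by replacing a bad basis vector $v_j$ with $v_j+v_1$.
\end{enumerate}
This, together with Cohen--Macaulayness of ${}^ST_n$, feeds into Scalamandre's comparison argument to give generation by $R$-integral apartments. The downward case again follows by duality.
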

    
    Note that in the previously-considered case $n=2$, $G$ has $\Q$-rank $1$, the Tits building is zero-dimensional, and the boundary components of $X_G$ do not intersect, simplifying the calculations. The proof of the identification of restricted Steinberg modules with partially compactified homology is in Section \ref{section:bs}, and follows standard methods dating back to Borel and Serre. This formalism applies to any subset of cusps/parabolic subgroups such that the associated subcomplex of the building of the group is connective enough, not just to the particular types of arithmetic restrictions we are able to obtain generators/presentations for in the above theorem.
    
    The bulk of the paper is devoted to the generators/presentations results for Steinberg modules which are the other part of Theorem \ref{thm:a}. The main work here consists of bootstrapping existing topological and matroid-theoretic results about unrestricted Steinberg modules to obtain our desired conclusions: along the way, we prove connectivity results for the associated restricted Tits buildings (Propositions \ref{prop:CM1} and \ref{prop:CM2}), construct Whitney homology-type resolutions (after Bj\"orner \cite{Bj}) for the restricted modules (Proposition \ref{prop:whitney}), and relate the matroidal circuit/Orlik--Solomon type presentations to Bykovskii-type presentations (Proposition \ref{prop:bykcir}). 

    Our main generation/presentation results encapsulated in the theorem above are the unimodular apartment class generation (Proposition \ref{prop:unimod}), which we are able to bootstrap from the simplicial arguments of \cite{Sca} and \cite{CFP}, and the Bykovskii presentation for the restricted modules which we deduce by matroid-theoretic methods (Proposition \ref{prop:Sbyk}).

    \subsection{A word of motivation}

    The arithmetic motivation from the putative extension classes associated to the boundary of locally symmetric spaces is more moral than practical: the locally symmetric spaces for general linear groups are rarely Shimura varieties, making it difficult to endow the extension classes with any Hodge or Galois structure. Our more immediate motivation is that we make extensive use of restricted Steinberg modules (and Orlik--Solomon-type resolutions thereof) in several papers on explicit  ``Eisenstein'' cocycles: to date, namely the articles \cite{X2} and (jointly) \cite{RX2} over $\Q$, and (jointly) \cite{SX} over function fields. A major purpose of this article is therefore to give a unified and precise treatment to the corresponding definitions and presentations. This goal is especially necessitated by the pair of forthcoming articles \cite{X3} \cite{X4}, in which we will again use restricted Steinberg modules (over $\Q$, or imaginary quadratic fields) to construct explicit partial modular symbols valued in motivic cohomology groups attached to elliptic schemes, corresponding to multiple \emph{elliptic} polylogarithms. In particular, one of our major interests in these motivic-valued modular symbols is to give constructions of new cases of the Sharifi conjectures, describing delicate torsion information in the $K$-theory of fields.

    All of the above-mentioned articles develop the idea, implicitly already present in \cite{SV} and also pursued in \cite[\S4]{CRR}, that Steinberg modules parameterize distinguished ``polylogarithm elements'' of arithmetic interest inside families of $n$-fold powers of one-dimensional groups. Our partial Steinberg modules equipped with local restrictions arise naturally in these settings because the simplest elements forming the building blocks, corresponding to $n=1$, need auxiliary local smoothing to be defined (e.g. theta functions on elliptic curves), and/or need to satisfy certain linear-algebraic conditions with respect to the \emph{torsion} of the algebraic groups to define their specializations.

    \subsection{Acknowledgements}

    I would like to thank Romyar Sharifi for motivating me to understand these issues, as well as Avner Ash and Andy Putman for helpfully answering my emails.

    \section{Simplicialities}

    We assemble some basic background on simplicial complexes in this section.

    \subsection{Posets}
    Let $P$ be any poset. Its (categorical) nerve $\mathcal{N}(P)$ is the simplicial set whose collection of $k$-simplices consists of chains
    \begin{equation} \label{eq:chain}
        x_0\le \ldots \le x_{k},
    \end{equation}
    with face maps given by forgetting each of the $x_\bullet$ in turn, and degeneracy maps given by doubling each of the $x_\bullet$ in turn (cf. \cite{R1}). 
    
    The ordinary geometric realization $\Delta(P):=|\mathcal{N}(P)|$ (cf. \cite{Sca}) can be identified with the classical \emph{order complex} attached to $P$: this is the simplicial complex whose vertices are objects of $P$, and $n$-simplices are given by strict chains
    \[
        x_0< \ldots < x_k.
    \]
    Given elements $x\le y$ in the poset $P$, we may form the \emph{interval} $(x,y)$, which is the subposet of elements strictly between $x$ and $y$. When $P$ is clear from the context, we may write $H_\bullet(x,y)$ for the homology of intervals.
    
    We will also consider some simplicial complexes associated to covers: let $X$ be a set, and $\mathcal{C}:=\{U_i\}_{i\in I}$ a family of subsets of $X$. The nerve $\mathcal{N}(\mathcal{C})$ is the simplicial complex whose vertices are the basic opens, with simplices corresponding to nonempty intersections. One may relate this nerve to poset nerves as follows:
    \begin{defn}
        The associated poset $P(\mathcal{C})$ has as objects finite subsets $J\subset I$ for which 
        \[
        \bigcup_{i\in J} U_i \ne \emptyset
        \]
        ordered by inclusion. 
    \end{defn}
    The nerve of the poset $\mathcal{N}(P(\mathcal{C}))$ is then the barycentric subdivision of $\mathcal{N}(\mathcal{C})$.

    \subsection{Connectivity conditions}

    Let $S$ be a simplicial complex of dimension $d$. We say that $S$ is \emph{$d$-spherical} if it has the homotopy type of a bouquet of $d$-spheres. This is equivalent to asking for it to be $(d-1)$-connected (i.e. have vanishing homotopy groups in degrees $\le d-1$), and also to asking for it to be simply connected with homology concentrated in top degree (by the Hurewicz and Whitehead theorems). If one drops the simply-connected requirement, we say instead that it is \emph{homologically} $d$-spherical.

    Let $\sigma \subset S$ be any simplex of dimension $s\le d$. Let the \emph{link} of $\sigma$, $\rm{Lk}(\sigma)\subset S$, be the subcomplex consisting of the union of all simplices $\tau\subset S$ disjoint from $\sigma$, such that the union of the vertices of $\tau$ and of $\sigma$ is a simplex of $S$. Then $\rm{Lk}(\sigma)\subset S$ is a subcomplex of dimension at most $d-s-1$. This motivates the following stronger version of sphericality, which amounts to adding that the dimension locally looks as expected everywhere:
    \begin{defn}
        The $d$-dimensional simplicial complex $S$ is (homologically) \emph{Cohen--Macaulay} if every simplex of dimension $s$ has link which is (homologically) $(d-s-1)$-spherical.
    \end{defn}
    Note that by the Hurewicz theorem, the only difference between the homological and homotopical notion is simple connectivity. In this article, we will always work with the weaker \emph{homologically} Cohen--Macaulay condition, and so we drop the ``homological'' going forward. Most, or perhaps all, of the complexes we will consider this for are surely actually homotopy Cohen--Macaulay, but the extra simple connectivity statement does not seem relevant to any of our applications, so we take the simpler route.
    
    If $S=|\mathcal{N}(P)|$ for a poset $P$, and $\sigma$ corresponds to a chain $x_0<\ldots < x_k$, then we can interpret $\rm{Lk}(S)$ as having simplices corresponding to chains which pass through none of the objects $x_0,\ldots, x_k$, but whose objects are comparable to all of them.

    \section{Steinberg modules}

    \subsection{Tits buildings}

    Let $R$ be a quotient or localization of a Dedekind domain,\footnote{We do not work in the maximum generality possible since we are mostly interested in some number theoretic applications.} and let $\mathrm{Fr}_n(R)$ be the poset of proper nonzero $R$-free summands of $R^n$. We define the Tits building $T_n(R)$ to be its order complex; this is the simplicial complex whose $i$-simplices are given by flags
    \[ \{0\} \ne F_0 \subset \ldots \subset F_{p}\subset R^n\]
    with the natural face relations. Note that when $R$ is a PID, one can check from this definition that the natural inclusion $T_n(R)\to T_n(\mathrm{Frac}(R))$ is an isomorphism.
    
    Bj\"{o}rner \cite[Theorem 4.1]{Bj2} proved the following strengthening of the classical Solomon--Tits theorem when $R$ is a field, which was generalized by Scalamandre \cite{Sca} to all the rings we are considering (and others):
    \begin{thm}
        The building $T_n(R)$ is Cohen--Macaulay for any quotient or localization of a Dedekind domain.
    \end{thm}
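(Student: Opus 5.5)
We have to prove that $T_n(R)$ is Cohen--Macaulay for $R$ a quotient or localization of a Dedekind domain. The plan is to reduce everything to one connectivity statement about buildings, plus the standard description of links in an order complex.

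\textbf{Step 1: identify the links.} Take a simplex $\sigma$ of $T_n(R)$, given by a flag $0=F_{-1}\subsetneq F_0\subsetneq\cdots\subsetneq F_p\subsetneq F_{p+1}=R^n$ of free summands. Its link is the join of the open intervals $(F_{i-1},F_i)$ in the poset of free summands, for $0\le i\le p+1$. Since $F_{i-1}$ is a summand of $F_i$, each interval $(F_{i-1},F_i)$ should be isomorphic to $\mathrm{Fr}_{m_i}(R)$ with $m_i=\operatorname{rank}F_i-\operatorname{rank}F_{i-1}$, via $W\mapsto W/F_{i-1}\subset F_i/F_{i-1}\cong R^{m_i}$. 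The point to check is that intermediate free summands correspond bijectively to free summands of the quotient. One direction is immediate. For the other, if $\overline W$ is a free summand of $F_i/F_{i-1}$, its preimage $W$ is an extension of free modules, hence free. Moreover, $F_i/W\cong (F_i/F_{i-1})/\overline W$ is projective. For this to be a summand in the sense of the definition we need this complement to be free, i.e.\ stably free implies free in the relevant sense. I would verify this for the rings in question, or else read ``free summand'' so that the quotient is only required to be a summand. This bookkeeping is the first place I expect care to be needed.

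\textbf{Step 2: join formula.} A join of complexes that are homologically $d_i$-spherical is homologically $(\sum d_i + k-1)$-spherical, where $k$ is the number of factors. This follows from the Künneth formula for reduced homology of joins, $\widetilde H_{*+1}(A*B)\cong\bigoplus \widetilde H(A)\otimes\widetilde H(B)\oplus\mathrm{Tor}$, using that top homology groups of complexes with only top homology are free. By Step 1, Cohen--Macaulayness of $T_n(R)$ for all $n$ therefore reduces to showing that each $T_m(R)$ is homologically $(m-2)$-spherical. The dimension count works out: $\sum_i (m_i-2) + (p+2)-1 = n-p-3 = d-s-1$ with $d=n-2$ and $s=p$.

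\textbf{Step 3: sphericity of $T_m(R)$, the main obstacle.} For fields this is Solomon--Tits, and for PIDs it follows from $T_m(R)\cong T_m(\mathrm{Frac} R)$. For semilocal rings, such as localizations of Dedekind domains at finitely many primes, and for quotients like Artinian rings $\prod \mathcal O/\mathfrak p^k$, I would argue by induction on $m$ with a Quillen fiber argument in the style of the classical proof. Fix a unimodular line $L$, and consider the subposet of summands $W$ with $L\not\subset W$ and $W+L$ again a summand. This subposet retracts onto a cone, via $W\mapsto W+L$ together with the fact that $L$ is a cone point. The complementary elements are attached along links that are, by induction through Step 1, buildings of smaller rank, hence spherical of the right dimension. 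Gluing these pieces gives the vanishing of $\widetilde H_i$ for $i<m-2$.

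The delicate part is ensuring that $W\cap(\text{complement})$ and $W+L$ remain free summands over non-domains. This is exactly where the semilocal or quotient hypothesis is used, through the triviality of projectives and the surjectivity $\mathrm{GL}_m(R)\to\mathrm{GL}_m(R/\mathrm{rad})$. If this fails, I would instead cite Scalamandre's general result for these rings at this step, and only carry out Steps 1--2 myself.
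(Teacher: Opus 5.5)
The paper gives no argument of its own here: it cites Bj\"orner for fields and Scalamandre for all the other rings. Your Steps 1--2 are correct and standard. Links in an order complex are joins of intervals. The intervals are smaller buildings; the stably-free issue you flag is harmless, by Steinitz's theorem for Dedekind domains and because constant-rank projectives over finite products of local rings are free. Joins of homological spheres are homological spheres. This is the same mechanism the paper uses for Proposition~\ref{prop:CM2}, and together with Solomon--Tits it gives a complete, more elementary proof for fields and PIDs than the appeal to Bj\"orner. Everything beyond fields and PIDs sits in Step 3, however, and your own argument there has genuine gaps.

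\textbf{First gap.} The subposet $\{W : L\not\subset W,\ W+L \text{ a summand}\}$ does not contain $L$. It is also not preserved by $W\mapsto W+L$, whose value may even be $R^m$, so there is no cone contraction. Already over a field the correct set-up is the subposet of $W$ with $W+L\neq R^m$, which contains $L$ and everything containing it. It is contracted via $W\le W+L\ge L$, and its complement is the antichain of hyperplanes $H$ with $H\oplus L=R^m$, each attached along $\Delta(\hat 0,H)\cong T_{m-1}$.

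\textbf{Second gap.} Over a non-field the natural analogue $Y=\{W : W+L \text{ is a proper free summand}\}$ has a much worse complement. Take $R=\Z/p^2$ and $L=\langle e_1\rangle$. The free summands $W_1=\langle e_1+pe_2\rangle\subset W_2=\langle e_1+pe_2,e_3\rangle$ satisfy $W_1+L=\langle e_1,pe_2\rangle$ and $W_2+L=\langle e_1,pe_2,e_3\rangle$, which are not free, hence not summands. So the bad elements occur in several ranks, nested inside one another, rather than as an antichain of complementary hyperplanes. The one-step gluing of cones onto smaller buildings therefore does not go through. Triviality of projectives and surjectivity of $\GL_m(R)\to\GL_m(R/\mathrm{rad})$ do not repair this.

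\textbf{Third gap.} You never treat non-PID Dedekind domains, such as $\mathcal{O}_F$ with nontrivial class group (the trivial localization). There $T_n(R)$ is a proper subcomplex of $T_n(F)$, since only subspaces $V$ with $V\cap R^n$ free occur, and neither your PID case nor your semilocal case applies. The semilocal localizations you list are PIDs, so they were already covered.

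Your proposal is therefore complete only through the fallback citation of Scalamandre, which is exactly the paper's route. Since that result is already the Cohen--Macaulay statement, Steps 1--2 become superfluous once you cite it.
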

    In particular, $T_n(R)$ is $(n-3)$-connected, and its reduced top homology we define to be the Steinberg module for $R$:
    \[
    \St(R):= \tilde{H}_{n-2}(T_n(R)).
    \]
    Again, this is the same as the Steinberg module for its fraction field when $R$ is a PID. We may distinguish a set of cycles which really do depend on the ring $R$, and not simply its fraction field: these are the $R$-integral apartment classes 
    \[
    [\ell_1, \ldots, \ell_n]:= \sum_{\sigma\in S_n} (-1)^{\rm{sgn}\,\sigma} [\langle \ell_{\sigma(1)}\rangle \subset \ldots \subset \langle \ell_{\sigma(1)},\ldots, \ell_{\sigma(n-1)}\rangle].
    \]
    Here, $\ell_1,\ldots, \ell_n$ are rank-one summands (which we will informally refer to as ``lines'') which together are \emph{unimodular}, i.e. span $R^n$, as mentioned in the introduction. In fact, even for $k<n$ lines $\ell_1,\ldots, \ell_k$, one may ask that they span a rank-$k$ free summand and define a corresponding cycle 
    \[
        [\ell_1, \ldots, \ell_k] \in C_{k-2}(T_n(R))
    \]
    which is the image of an apartment class coming from an inclusion $T_k(R)\to T_n(R)$, though these classes are visibly boundaries (as is forced by the connectivity of $T_n(R)$).

    It is classical that the apartment classes generate when $R$ is a field; moreover, if $R$ is a PID, $T_n(R)=T_n(F)$. In this setting, Maazen proved $\St_n(F)$ is generated by $R$-integral apartment classes for Euclidean $R$ \cite{Maa} and Church--Putman--Farb \cite{CFP} the same for $R$ with a non-complex place, with at least two places which are either archimedean or inverted in $R$. Beyond PIDs, \cite[Theorem B]{Sca} showed that $\St_n(R)$ is generated by apartment classes for any Dedekind domain with a real place, as well for a larger class of rings including local and finite Artinian ones. 

    The Tits building always has an involution we denote by
    \[
        \delta: T_n(R) \to T_n(R), [S]\mapsto [S^\perp]
    \]
    which on $0$-simplices sends a free summand of $R^n$ to its orthogonal complement under the standard inner product on $R^n$, and reverses the order of flags. Under the natural left action of $g\in \GL_n(R)$ on the Tits building, we have $g\circ \delta = \delta \circ (g^T)^{-1}$. One may check on the level of the top-dimensional apartment classes that this involution induces
    \begin{equation} \label{eq:apartdelta}
        [\ell_1,\ldots, \ell_n] \mapsto [\ell_1^\vee,\ldots, \ell_n^\vee]    
    \end{equation}
    where $\ell_i^\vee$ is the orthogonal complement of the span of $\ell_1,\ldots, \hat{\ell}_i, \ldots, \ell_n$. Below top dimension, $\delta$ does not send apartment cycles to apartment cycles; relatedly, one sees that $\delta$ is not equivariant for inclusions of lower-dimensional Tits buildings.

    \subsubsection{Restricted complexes}
    We will study some restricted versions of the Tits building, parameterized by a subspace $S$ which is a proper nonzero free summand of some quotient $(R/I)^n$ for some proper ideal $I$ of a subring of $R$. We then define the ``upwards'' and ``downwards'' $S$-avoiding posets
    \begin{itemize}
        \item ${}^S\mathrm{Fr}_n(R)$, the poset of proper nonzero free summands of $R^n$ which are not contained in $S$ modulo $I$, and
        \item ${}_S\mathrm{Fr}_n(R)$, the poset of proper nonzero free summands of $R^n$ which do not contain $S$ modulo $I$.
    \end{itemize}
    We then define the restricted Tits complexes ${}^ST_n(R)$ and ${}_ST_n(R)$ to be the respective order complexes, still of dimension $n-2$; these are subcomplexes of $T_n(R)$. We let ${}^S\St_n(R)$ and ${}_S\St_n(R)$ to be the corresponding reduced top homologies. 
    
    If $R$ is a domain with field of fractions $F$, we may also similarly define ${}^S\mathrm{Fr}_n(F)$, ${}_S\mathrm{Fr}_n(F)$, ${}^ST_n(F)$, ${}_ST_n(F)$ ${}^S\St_n(F)$, and ${}_S\St_n(F)$, in the same manner, since subspaces of $F^n$ yield free and cofree summands of $R^n$ which may be reduced modulo $I\subset R$. If $R$ is a PID, there is no distinction between these definitions.\footnote{More generally, as in \cite[\S4.4]{Sca}, one may interpolate between these two constructions by picking subsets of the Picard group of isomorphism classes of projective $R$-modules.} 

    We call these \emph{restricted} Steinberg modules. For any tuple of lines $\ell_1,\ldots, \ell_n$ for which no proper subset spans a subspace contained in $S$ modulo $I$, respectively no line is contained in $S$ modulo $I$, we have an apartment class $[\ell_1,\ldots, \ell_n]$ in ${}^S\St_n(R)$, respectively ${}^S\St_n(R)$, (or also the $F$-variants when $R$ is a domain) though it is not yet clear these generate. We may also consider variants ${}^S_T\St_n(R), {}^S_TT_n(R)$ (and their $F$-analogues), etc. by imposing \emph{both} types of condition, modulo a pair of relatively prime ideals of $R$.

    We have the following duality between the upwards and downwards restrictions which is immediate from definitions:
    \begin{prop} \label{prop:delta}
    The map $\delta$ restricts to give a pair of inverse isomorphisms between ${}^{S}_TT_n(R)$ and ${}^{T^\perp}_{S^\perp}T_n(R)$ (or their $F$-analogues), and thus an inverse-transpose twisted $\GL_n(R)$-equivariant identification between the restricted Steinberg modules given as before by the map \eqref{eq:apartdelta}.
    \end{prop}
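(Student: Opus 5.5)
Since $\delta$ is an involution of $T_n(R)$ (orthogonal complement is order-reversing and involutive on free summands of $R^n$, and orthogonal complements of free summands are free summands), the whole proof reduces to two checks. First, $\delta$ carries the vertex set ${}^S_T\mathrm{Fr}_n(R)$ bijectively onto ${}^{T^\perp}_{S^\perp}\mathrm{Fr}_n(R)$. Second, the induced map on reduced top homology has the stated equivariance and effect on apartments.

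For the vertex check I will work modulo the relevant ideal. The key compatibility is that for a free summand $V\subset R^n$ one has $\overline{V^\perp}=\overline{V}^\perp$ inside $(R/I)^n$, with respect to the standard form. To see this, choose a basis of $R^n$ adapted to $V$ via an element $g\in\GL_n(R)$, so that $V=gR^k$. Then $V^\perp=(g^T)^{-1}(0\oplus R^{n-k})$, and reduction commutes with transposition and inversion, which gives the identity. Over $R/I$ the form remains perfect, so for free summands (and for $S,T$, which are free summands of the quotient) we have the equivalence
\[
\overline V\subset S \iff S^\perp\subset \overline{V}^\perp=\overline{V^\perp}.
\]
Hence $V$ is not contained in $S$ modulo $I$ exactly when $V^\perp$ does not contain $S^\perp$ modulo $I$. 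In the same way, $V$ does not contain $T$ modulo $J$ exactly when $V^\perp$ is not contained in $T^\perp$ modulo $J$. The upper condition thus becomes a lower condition with the complemented subspace, and conversely. Because $\delta$ reverses inclusions, it sends chains to chains, and so it gives a simplicial isomorphism ${}^S_TT_n(R)\to{}^{T^\perp}_{S^\perp}T_n(R)$. Applying the same argument with $(T^\perp,S^\perp)$ in place of $(S,T)$ and using $S^{\perp\perp}=S$ shows that the two maps are mutually inverse. The $F$-analogue follows identically, since subspaces of $F^n$ correspond to summands of $R^n$ and $\perp$ commutes with saturation.

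For the homology statement, a simplicial isomorphism induces an isomorphism on $\tilde H_{n-2}$. The relation $g\circ\delta=\delta\circ(g^T)^{-1}$ holds already on vertices, since $(gV)^\perp=(g^T)^{-1}V^\perp$, and this gives the twisted equivariance. Here one should note that $(g^T)^{-1}$ preserves $S^\perp,T^\perp$ whenever $g$ preserves $S,T$. Finally, the formula \eqref{eq:apartdelta} is already established on apartment classes in $T_n(R)$. The restricted modules inject into $\St_n(R)$ because the restricted complexes are $(n-2)$-dimensional subcomplexes, so top homology consists of cycles with no boundaries to quotient by. Therefore the formula continues to hold for any restricted apartment class.

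The only step that needs care is the reduction identity $\overline{V^\perp}=\overline V^\perp$ over a non-field $R/I$. The adapted-basis argument handles it provided $V$ is a free summand with free complement. Under the standing hypotheses on $R$ this holds: it is immediate for PIDs and local rings, and it follows in general after localizing at the primes containing $I$.
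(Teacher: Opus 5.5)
Your proposal is correct and takes essentially the same approach as the paper, which states this duality without proof as ``immediate from definitions''. You simply unpack it: reduction commutes with $\perp$, so $\overline{V^\perp}=\overline{V}^\perp$, and since $\perp$ is an inclusion-reversing involution on summands, the upper and lower avoidance conditions are exchanged; the homology statement then follows from the simplicial isomorphism together with the injectivity of top homology of a subcomplex into $\St_n(R)$.
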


    \subsection{Matroids and geometric lattices}

    We briefly recall some basic notions from matroid theory; our reference for this material is \cite{Bj}. A \emph{matroid} on a ground set $E$ is a subset of the power set $2^E$ which roughly captures the notion of ``linearly independent sets''. Considering $2^E$ as an abstract simplicial complex (whose $p$-simplices are the $(p+1)$-subsets, with evident face maps), one may define a matroid to precisely be a simplicial subcomplex $\mathcal{M}\subset 2^E$ satisfying the \emph{exchange axiom}: if a simplex $S_1$ has more vertices than $S_2$, then there exists a vertex $v\in S_1\setminus S_2$ such that $\{v\} \cup S_2$ is a simplex of $\mathcal{M}$. 

    The \emph{rank} $r(\mathcal{M})$ of the matroid is its one more than its dimension as a simplicial complex, i.e. the maximal size of an independent set; we may simply write $r$ at times when there is no ambiguity about the matroid in question. More generally, one may similarly define the rank of a subset of $E$ (for $\mathcal{M}$) as the maximal independent subset. For each $1\le d \le r(\mathcal{M})$, one then may define the rank-$d$ \emph{flats} as the maximal sets of rank $d$: these form a lattice $L(\mathcal{M})$ under intersection (and in fact can be used to give an alternate axiomatization of matroids). A \emph{circuit} is a minimal \emph{dependent} set, i.e. such that removing any element results in an independent set. 
    
    The lattice $L(\mathcal{M})$ has a minimal element corresponding to the empty set of rank zero; the \emph{atoms} $A(\mathcal{M})$ are the minimal nonzero elements of $L(\mathcal{M})$; it also has a unique maximal flat corresponding to the entire set $E$. Define $T(\mathcal{M})$ to be the order complex of the sublattice of nonzero nonmaximal flats. Bj\"{o}rner proved that both $\mathcal{M}$ itself and $T(\mathcal{M})$ are Cohen--Macaulay simplicial complexes of dimension $r-1$, respectively $r-2$, in particular having reduced homology concentrated in top degree.
    
    The \emph{Orlik--Solomon algebra} associated to $\mathcal{M}$ was first introduced in \cite{OS}, though in the specific setting of hyperplane arrangements rather than general matroids. Let $\Lambda(E)_\bullet$ be the free augmented graded commutative algebra over $\Z$ generated in degree $1$ by the set $E$. It has a cdga structure given by extending the augmentation in degree $1$ using the Leibniz rule; we then may define the Orlik--Solomon ideal $J_{\mathcal{M}}$ to be the cdga ideal generated by 
    \[
        \partial([e_1]\wedge \ldots \wedge [e_k])
    \]
    for each circuit $\{e_1,\ldots, e_k\}$ of $\mathcal{M}$; one may check that $\OS(\mathcal{M})_\bullet:=\Lambda(E)_\bullet/J_{\mathcal{M}}$ is an acyclic cdga concentrated in degrees $0 \le \bullet \le r(\mathcal{M})$. 
    
    The identification in the following theorem was originally proven by Orlik--Solomon \cite{OS} for hyperplane arrangements, though in somewhat different non-matroidal language. The complete statement in the generality below is due to Bj\"{o}rner \cite{Bj}:
    \begin{prop} \label{thm:OS and order}
         We have an ``apartment map''
        \[
        \mathrm{OS}(\mathcal{M})_{r} \to \tilde{H}_{r-2}(T(\mathcal{M})),\;\;\; [e_1]\wedge \ldots \wedge [e_r] \mapsto \sum_{\sigma \in S_r} (-1)^{\mathrm{sgn}\, \sigma}[ \langle e_{\sigma(1)}\rangle \subset \ldots \subset \langle e_{\sigma(1)},\ldots, e_{\sigma(r-1)}\rangle]
        \]
        which is an isomorphism.
    \end{prop}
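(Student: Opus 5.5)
We prove that the apartment map is well defined, that it is surjective, and that both sides are free abelian groups of the same rank. We then check that the map carries a basis to a basis. The organizing tool is Bj\"orner's broken-circuit (nbc) machinery.

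\textbf{Well-definedness.} The map is first defined on $\Lambda(E)_r$. The displayed sum is a cycle in $C_{r-2}(T(\mathcal{M}))$ when $e_1,\ldots,e_r$ are independent; in that case it is the fundamental class of the boolean sub-lattice of flats spanned by subsets of $\{e_1,\ldots,e_r\}$, which is the barycentric subdivision of the boundary of an $(r-1)$-simplex. If the $e_i$ are dependent, we send the monomial to zero. This is compatible with $\OS$, because a dependent monomial lies in $J_{\mathcal{M}}$: it contains a circuit $C$, and $e_c\cdot\partial(e_C)=\pm e_C$ for any $c\in C$.

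Next we check that the ideal $J_{\mathcal{M}}$ dies. In degree $r$ it is spanned by products $m\cdot \partial(e_C)$ with $m$ a monomial. Expanding by the Leibniz rule gives an alternating sum $\sum_i \pm[m, e_{c_1},\ldots,\widehat{e_{c_i}},\ldots,e_{c_k}]$. Modulo the terms already known to vanish, this reduces to the case where $m\cup C$ has rank $r$ and $C$ is the only dependency. Then the surviving terms are apartment cycles of bases $B_i=m\cup C\setminus\{c_i\}$. Their signed sum vanishes because all of them live in the fundamental class of the subdivided boundary of the polytope (a simplex with one extra vertex) given by the order complex of flats spanned by subsets of $m\cup C$. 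Equivalently, the chains cancel pairwise: a maximal flag appears in exactly two of the $B_i$-apartments, namely through the two elements of $C$ that are both absent from a given flat and first enter it together. The sign check here is the expected fiddly part.

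\textbf{Surjectivity and injectivity.} We argue by rank comparison. Fix a total order on $E$. Bj\"orner shows that $\tilde H_{r-2}(T(\mathcal{M}))$ is free with basis the apartment classes of the nbc-bases, i.e.\ bases containing no broken circuit (a circuit minus its smallest element). This follows from the shellability of $T(\mathcal{M})$ via its EL-labelling. The rank of that group is $|\mu(\hat0,\hat1)|$, which equals the number of nbc-bases by Rota's/Whitney's theorem.

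On the OS side, the relation $\partial(e_C)=0$ lets us rewrite any broken-circuit monomial $e_{C\setminus\min C}$, up to sign, as a combination of monomials that are lexicographically smaller and contain $\min C$. By induction, the nbc monomials span $\OS(\mathcal{M})_r$. Since they map to a basis of the target, they are linearly independent, so the map is an isomorphism.

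\textbf{Main obstacle.} The hard step is the basis statement for $\tilde H_{r-2}(T(\mathcal{M}))$: we must show that the nbc apartment cycles are independent and span. If Bj\"orner's Cohen--Macaulayness is only used for concentration of homology, this needs extra work. We would try the lexicographic shelling first. It has exactly one homology facet per nbc-basis, namely the flag whose EL-label sequence is decreasing. Each nbc apartment contains its own decreasing facet with coefficient $\pm1$ and no other decreasing facet, which gives a triangular comparison. This is the point where the argument genuinely uses matroid theory rather than formal algebra.
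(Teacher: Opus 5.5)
Your argument is correct and is essentially Björner's proof: the nullity-one pairwise cancellation gives well-definedness, nbc monomials span $\OS(\mathcal{M})_r$, and nbc apartment classes form a basis of $\tilde H_{r-2}$ via the EL-shelling of the lattice of flats; the paper gives no proof of its own and simply cites \cite{Bj} for exactly this. The one thing the paper's setting adds is that for infinite $\mathcal{M}$ (e.g.\ $\mathcal{P}_n(F)$ with $F$ infinite) you should first reduce to finite spanning restrictions by a filtered colimit, as in the remark following the proposition, because your shelling and rank-count steps are only valid for finite matroids.
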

    The classical examples of matroids are arrangements of nonzero vectors in $F^n$ for a field $F$, with the simplices corresponding to linearly independent sets; the corresponding lattice of flats is the lattice of subspaces spanned by subsets of these vectors. (One may also formulate things in terms of the dual hyperplanes and their intersections, which will arise later.) We will primarily be concerned with (minor variants of) the rank-$n$ \emph{projective matroid}, consisting of ground set $\mathbb{P}^{n-1}(F)$ and the natural independence condition.
    \begin{rem}
        Some of the literature we cite pertaining to matroid/Orlik--Solomon algebras is formulated for only \emph{finite} matroids, hyperplane arrangements, etc. while the full projective matroid will be infinite unless $F$ is finite. Generally, most of the constructions (matroid homology, the Orlik--Solomon algebra, etc.) are covariant in the matroid, allowing us to simply take filtered direct limits everywhere and preserve any homological calculations. For a few things, we will need to be more careful; we will mention when this occurs.
    \end{rem}

    We will call this matroid $\mathcal{P}_n(F)$; it is of rank $n-1$, and its lattice of flats is precisely the poset $\mathrm{Fr}_n(F)$ (minus zero), and the atoms correspond to the lines of the ground set; the corresponding order complex is then the Tits building $T_n(F)$.

    \subsection{Connectivity and Whitney homology of restricted buildings}
    
    Let $R$ be a domain with fraction field $F$, $S$ and $T$ proper nonzero subspaces modulo relatively prime ideals $I$ and $J$ of $R$. Define ${}^{S}\mathcal{P}_n(F)$ to be the full submatroid of the projective matroid on the subset of the ground set $\PP^{n-1}(F)$ whose reduction modulo $I$ is not contained in $S$. Then its corresponding order complex is precisely ${}^{S}T_n(R)$. Bj\"{o}rner \cite{Bj} proved that any matroid lattice has Cohen--Macaulay order complex; further, Proposition \ref{thm:OS and order} implies apartment classes always generate the top homology. The duality isomorphism between upwards- and downwards-restricted Tits complexes therefore immediately implies:

    \begin{prop} \label{prop:CM1}
    ${}^{S}T_n(F)$ and ${}_TT_n(F)$ are Cohen--Macaulay of dimension $n-2$; their top homologies ${}^S\St_n(F)$ and ${}_T\St_n(F)$ are generated by $F$-apartment classes.
    \end{prop}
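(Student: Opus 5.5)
The plan is to treat the upward-restricted case directly through matroid theory, and then obtain the downward case by duality (Proposition \ref{prop:delta}).

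First I will check that ${}^{S}\mathcal{P}_n(F)$ is a matroid of rank $n$. Its ground set consists of the lines whose reduction modulo $I$ is not contained in $S$. Any full submatroid of a matroid, obtained by restricting the ground set, is again a matroid. For the rank claim I need $n$ independent lines outside $S$ modulo $I$. Since $S$ is proper, its preimage in $(R/I)^n$ does not contain all standard-basis-type vectors. So I pick $v\notin S$ modulo $I$, and take $v, v+e_2, \ldots$ after reordering coordinates; more concretely, the lines spanned by $v$ and $v+ x e_i$ with $x\in I$ suitably chosen give $n$ independent lines, all reducing to $v$ and hence not in $S$.

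Next I identify the lattice of flats of ${}^{S}\mathcal{P}_n(F)$ with ${}^S\mathrm{Fr}_n(F)\cup\{0, F^n\}$. This is the main obstacle. A flat is the set of allowed lines inside a span, and that span is itself a subspace $W$ spanned by allowed lines. I must show two things:
\begin{itemize}
\item A proper nonzero subspace $W$ lies in ${}^S\mathrm{Fr}_n$, i.e.\ $W$ is not contained in $S$ modulo $I$, if and only if $W$ is spanned by allowed lines.
\item Distinct such $W$ give distinct flats.
\end{itemize}
Distinctness is clear, since $W$ is recovered as the span of its allowed lines. For the spanning claim, let $W$ be such that the reduction of the saturated lattice $W\cap R^n$ is not contained in $S$. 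Pick $w$ in that lattice with reduction outside $S$. Then for any basis $w_i$ of $W$, the vectors $w + c\,w_i$ with $c\in I$ give allowed lines spanning $W$. This is the same trick as in the rank check, and it works whenever $I\neq 0$ is proper. Conversely, if $W$ is contained in $S$ modulo $I$, then no line of $W$ is allowed. Once the lattice is identified, the order complex $T({}^{S}\mathcal{P}_n(F))$ equals ${}^ST_n(F)$. Bj\"orner's theorem then gives Cohen--Macaulayness in dimension $n-2$, and Proposition \ref{thm:OS and order} shows that the top homology is the image of $\OS_n$, which is spanned by apartment classes $[\ell_1,\ldots,\ell_n]$ on allowed lines. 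These are exactly the $F$-apartment classes. If the matroid is infinite, I pass to a filtered colimit over finite submatroids, using the remark on infinite matroids. Cohen--Macaulayness passes to the colimit because links of simplices are colimits of links, and homology commutes with filtered colimits.

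Finally, for ${}_TT_n(F)$ I apply Proposition \ref{prop:delta} with the upward restriction taken empty, which gives ${}_TT_n(F)\cong {}^{T^\perp}T_n(F)$. Here $T^\perp$ is taken modulo $J$, which is again a proper nonzero subspace. Since $\delta$ is a simplicial isomorphism, the Cohen--Macaulay property transfers. By \eqref{eq:apartdelta}, apartment classes are sent to apartment classes, so generation transfers as well.
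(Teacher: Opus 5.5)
Your proposal is correct and follows the paper's own route. You restrict the projective matroid to the allowed lines and identify the proper part of its lattice of flats with ${}^S\mathrm{Fr}_n(F)$. You then invoke Bj\"orner's Cohen--Macaulay theorem, use the Orlik--Solomon apartment map (Proposition \ref{thm:OS and order}) for generation, and transfer everything to ${}_TT_n(F)$ via the duality $\delta$ of Proposition \ref{prop:delta}.

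You also supply details the paper asserts without comment: that the restricted matroid has full rank $n$, that every subspace not contained in $S$ modulo $I$ is spanned by allowed lines, and the filtered-colimit passage for infinite matroids. Your arguments for these are sound.
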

    
    The matroidally-defined ${}^S\St_n(F)$ has an Orlik--Solomon algebra, but not ${}_T\St_n(F)$ or ${}^S_T\St_n(F)$. However, they do have \emph{resolutions} by Orlik--Solomon-like complexes without natural algebra structures, which may be useful in applications. These are inspired by the identification of the Orlik--Solomon algebra of a matroid with the \emph{Whitney homology} of a graded poset \cite[\S7.9]{Bj}, which we explain briefly in our particular setting; the spectral sequence argument below is taken from \cite{Bac}. 
    
    For a subposet $P$ of $\mathcal{P}_n(F)$, we have an ascending filtration by rank
    \[
        \emptyset = F_0P \subset F_1P \subset \ldots \subset F_{n-1}P = P
    \]
    whose $i$th part consisting of subspaces with rank at most $i$. The homological spectral sequence for the associated stratification on $\Delta(P)$ has $E^1$ page given by
    \[
    E^1_{p,q} = H_{p}(F_{q}P, F_{q-1}P) \cong \bigoplus_{\mathrm{rk}\, M = q} \tilde{H}_{p-1}(\hat{0}, M)
    \]
    where recall this latter notation denotes the homology of the order complex of an interval; the lower bound $\hat{0}$ simply means that we do not impose a lower bound on the interval. For $P={}^S_T\mathcal{P}_n(F)$, each interval of the form $(\hat{0},M)$ is the $S\cap M/I$-restricted Tits complex for the free module $M$, ${}^{S\cap M/I}\mathcal{P}(M)$, because since $M$ does not contain $T$ in the reduction modulo $J$, neither does any submodule of $M$. These intervals therefore have order complexes which are Cohen--Macaulay, with top reduced homology a restricted Steinberg module for $M$. The $E^1$-page of the spectral sequence therefore reduces to the following so-called ``Whitney homology'' exact sequences:

    \begin{prop} \label{prop:whitney}
        We have exact sequences
        \[
        0\to {}_T\St_n(F) \to \bigoplus_{\substack{M/J\not\supset T\\ \mathrm{rk}\, M = n-1}} \St(M) \to \bigoplus_{\substack{M/J\not\supset T\\ \mathrm{rk}\, M = n-2}} \St(M) \to \ldots \to \bigoplus_{\substack{M/J\not\supset T\\ \mathrm{rk}\, M = 1}} \St(M) \to \Z \to 0
        \]
        and 
        \[
        0\to {}_T^S\St_n(F) \to \bigoplus_{\substack{M/J\not\supset T\\ \mathrm{rk}\, M = n-1}} {}^{S\cap M}\St(M) \to \bigoplus_{\substack{M/J\not\supset T\\ \mathrm{rk}\, M = n-2}} {}^{S\cap M}\St(M) \to \ldots \to \bigoplus_{\substack{M/J\not\supset T\\ \mathrm{rk}\, M = 1}} {}^{S\cap M}\St(M) \to \Z \to 0
        \]
        where the maps are the Whitney differentials defined previously. Here, we write $S\cap M$ for $S\cap M/I$ for short, and when this is zero we simply consider ${}^{S\cap M}\St(M)$ to be the unrestricted Steinberg module.
    \end{prop}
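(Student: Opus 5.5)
The plan is to run the rank-filtration spectral sequence set up just before the statement, with $P={}_T\mathrm{Fr}_n(F)$ for the first sequence and $P={}^S_T\mathrm{Fr}_n(F)$ for the second. We also adjoin a top element $\hat 1=F^n$, so that $P\cup\{\hat 1\}$ is ranked with $F^n$ in rank $n$. Filtering $\Delta(P)$, or more conveniently the cone of $\Delta(P)$ with apex $\hat 1$ relative to $\Delta(P)$, by rank gives
\[
E^1_{p,q}\cong\bigoplus_{\mathrm{rk}\,M=q}\tilde H_{p-1}(\hat 0,M).
\]
We use the conventions that the rank-$1$ intervals are empty, with $\tilde H_{-1}(\emptyset)=\Z$, and that the $\hat 1$ term contributes $\tilde H_{n-2}(P)$. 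Thus $\hat 1$ contributes the restricted Steinberg module we want, and the rank-$1$ layer contributes one copy of $\Z$ per admissible line.

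\textbf{Step 1: identify the intervals.} For $M\in P$ with $M/J\not\supset T$, every submodule of $M$ also fails to contain $T$ modulo $J$. Hence $(\hat 0,M)$ is the full interval $\mathrm{Fr}(M)$ in the first case, and the upward-restricted poset ${}^{S\cap M}\mathrm{Fr}(M)$ in the second. When $S\cap M/I$ is zero, or more generally when the restriction is vacuous, the second interval is unrestricted. By the Cohen--Macaulay theorem of Bj\"orner (and Scalamandre) in the first case, and by Proposition~\ref{prop:CM1} applied to $M$ in the second, each such interval has reduced homology only in degree $\mathrm{rk}\,M-2$, equal to $\St(M)$, respectively ${}^{S\cap M}\St(M)$.

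\textbf{Step 2: the spectral sequence collapses.} Each column $q$ of $E^1$ is concentrated on the line $p=q-1$. The $d^1$ differentials therefore run along this single line, and there are no higher differentials. The $d^1$ is the connecting map of the triple $(F_qP,F_{q-1}P,F_{q-2}P)$, which on chains sends a maximal chain of $(\hat 0,M)$ to the sum over its top elements $M'$ of rank $q-1$. This is the Whitney differential. The $E^2=E^\infty$ page computes the homology of the total space. The cone $\Delta(P\cup\{\hat 1\})$ is contractible, and we also use the Cohen--Macaulayness of $P$ itself from Proposition~\ref{prop:CM1}; for the doubly restricted $P$ we obtain this by running the same filtration argument on Step 1's acyclicity. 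Together these force the line complex to be exact everywhere. Reading the line from rank $n$ down to rank $1$, where each rank-$1$ interval contributes $\Z$ per line and these map to the augmentation $\Z$ coming from the empty chain, gives exactly the displayed sequences. The left end is injective: nothing has rank $n+1$, and the $q=n$ term is ${}_T\St_n(F)$ or ${}^S_T\St_n(F)$, mapping via $d^1$ into the rank-$(n-1)$ sum.

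\textbf{Main obstacle.} The delicate point is exactness at the two ends and the correct bookkeeping of the augmentation. The acyclicity of the full complex comes from contractibility of the cone, but converting "$E^\infty$ vanishes" into exactness of the line requires keeping the $\tilde H_{-1}$ convention consistent. The doubly restricted case also needs Cohen--Macaulayness of ${}^S_TT_n(F)$, which is not literally covered by Proposition~\ref{prop:CM1}. My first attempt there is to induct on $n$ using the same filtration: Step 1 shows that all proper lower intervals are spherical, so the homology of $\Delta(P)$ is computed by the line complex. Its concentration in top degree then reduces to exactness of the lower part, which follows from the analogous sequences for smaller ranks via a standard Bj\"orner-style shelling/link argument. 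If this fails, I would apply the duality of Proposition~\ref{prop:delta} to swap the roles of $S$ and $T$ and argue from the upper intervals instead.
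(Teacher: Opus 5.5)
\textbf{There is a genuine gap in the second (doubly restricted) sequence.} Your Step 1 and the collapse in Step 2 are exactly the paper's argument. For the first sequence they suffice: Proposition~\ref{prop:CM1} says ${}_TT_n(F)$ is homologically $(n-2)$-spherical, so the collapsed $E^1$ line is exact.

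For the second sequence, concentration on $p=q-1$ only tells you that the augmented Whitney complex computes $\tilde H_*({}^S_TT_n(F))$. Its homology at the rank-$q$ spot is $\tilde H_{q-1}({}^S_TT_n(F))$, and the kernel at rank $n-1$ is ${}^S_T\St_n(F)$. So exactness of the displayed sequence is \emph{equivalent} to ${}^S_TT_n(F)$ having vanishing reduced homology below degree $n-2$; it is not a consequence of anything in the filtration. Adjoining $\hat 1$ adds nothing. The $\hat 1$-column is $E^1_{p,n}=\tilde H_{p-1}({}^S_TT_n(F))$ itself, with no constraint, and contractibility of the cone just reproduces the same identification. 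Hence ``running the same filtration argument'' to obtain sphericity of the doubly restricted poset is circular.

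Your proposed induction does not repair this. The sequences in smaller rank give exactness for each lower interval $(\hat 0,M)$, but sphericity of every lower interval does not force global sphericity. For the face poset of a triangulated torus, every lower interval is a sphere, and the Whitney complex is the augmented simplicial chain complex of the torus, which is not exact. The duality fallback has the same defect. $\delta$ carries ${}^S_T$ to the again doubly restricted ${}^{T^\perp}_{S^\perp}$, and filtering by corank only gives you control of the (singly restricted) upper intervals.

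What is missing is a genuinely global proof that ${}^S_TT_n(F)$ is homologically $(n-2)$-spherical. One possible route is to control the deletion of the subspaces containing $T$ mod $J$ from the Cohen--Macaulay complex ${}^ST_n(F)$. Note that the standard Hatcher--Vogtmann deletion lemma does not apply directly there. The links of deleted chains are doubly restricted buildings of smaller rank, and these are not as highly connected as the lemma requires.

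For comparison, the paper's own text has the same hole. It reads the exact sequences off the collapsed $E^1$ page, and then deduces Proposition~\ref{prop:CM2} from ``the spectral sequence computation'' itself. So the paper does not supply this input either.
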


    \begin{prop} \label{prop:CM2}
        The doubly-restricted complex ${}^S_T T_n(F)$ is Cohen--Macaulay.
    \end{prop}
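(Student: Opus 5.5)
The plan is to reduce everything to homological sphericity of intervals in the poset $P={}^S_T\mathrm{Fr}_n(F)$, which consists of proper nonzero subspaces $V\subset F^n$ with $V/I\not\subset S$ and $V/J\not\supset T$. This will be done by induction on $n$.

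\textbf{Links.} For a simplex given by a chain $x_0<\ldots<x_k$ in $P$, its link in the order complex is the join
\[
(\hat 0,x_0)_P * (x_0,x_1)_P * \cdots * (x_k,\hat 1)_P .
\]
A join of homologically spherical complexes of dimensions $d_i$ is homologically $(\sum d_i + k)$-spherical, by the Künneth formula for joins. So it suffices to show that each interval is homologically spherical of the expected dimension $\mathrm{rk}(y)-\mathrm{rk}(x)-2$. I will identify each kind of interval with a complex already known to be Cohen--Macaulay.
\begin{itemize}
\item For $(\hat 0,x_0)$: subspaces of $x_0$ automatically fail to contain $T$ mod $J$, since $x_0$ does not. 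So the interval is ${}^{S\cap x_0}T(x_0)$ (or the full building $T(x_0)$ if $S\cap x_0/I=0$). Here $S\cap x_0/I$ is proper because $x_0/I\not\subset S$, so this interval is Cohen--Macaulay by Proposition \ref{prop:CM1}.
\item For $(x_i,x_{i+1})$: every intermediate subspace contains $x_i$, hence is not in $S$, and lies in $x_{i+1}$, hence does not contain $T$. So the interval is the full building $T(x_{i+1}/x_i)$.
\item For $(x_k,\hat 1)$: the $S$-condition is automatic. Passing to $F^n/x_k$, the interval becomes ${}_{T'}T(F^n/x_k)$ with $T'$ the image of $T$ (modulo $J$). $T'$ is nonzero since $x_k/J\not\supset T$, so again Proposition \ref{prop:CM1} (via Proposition \ref{prop:delta}) applies.
\end{itemize}
Hence every nonempty simplex has spherical link. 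What remains is the empty simplex, i.e.\ that ${}^S_TT_n(F)$ itself is homologically $(n-2)$-spherical.

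\textbf{Global sphericity.} Start from $X={}_TT_n(F)$, which is Cohen--Macaulay by Proposition \ref{prop:CM1}. Delete the ``bad'' vertices $V$ (those with $V/I\subset S$, $V/J\not\supset T$) in order of \emph{decreasing} rank. When $V$ is deleted, all bad vertices above it are already gone and all vertices below it (all bad, since $V\subset S$ mod $I$) are still present. Its link in the current complex $X'$ is therefore
\[
T(V) * \{W\supsetneq V : W/I\not\subset S,\ W/J\not\supset T\}.
\]
The second factor is the doubly restricted complex for $F^n/V$ with the images of $S$ and $T$, so by induction on $n$ the link is homologically $(n-3)$-spherical. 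The degenerate cases, where the image of $S$ or $T$ is zero or everything, give either an unrestricted building or a cone and are handled directly. Note that the hypothesis that $I,J$ are coprime is used here: it lets us pass to quotients and still have conditions modulo $I$ and $J$ that do not interact, by the Chinese remainder theorem.

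Write $X'=X\smallsetminus\{V\}$, so that $X=X'\cup\mathrm{St}(V)$ with $X'\cap \mathrm{St}(V)=\mathrm{Lk}(V)$. Mayer--Vietoris then gives $\tilde H_i(X')=0$ for $i\le n-4$, together with an exact sequence
\[
\tilde H_{n-2}(X)\to \tilde H_{n-3}(\mathrm{Lk}\,V)\to \tilde H_{n-3}(X')\to 0.
\]

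\textbf{Main obstacle.} The hard step is surjectivity of the connecting map $\tilde H_{n-2}(X)\to \tilde H_{n-3}(\mathrm{Lk}\,V)$, which is needed to kill $\tilde H_{n-3}(X')$. My first attempt is to use that $\tilde H_{n-3}(\mathrm{Lk}\,V)$ is generated by products of apartment classes. Such a class is $[\ell_1,\dots,\ell_d]\otimes[\bar m_{1},\dots,\bar m_{n-d}]$, where $\ell_i$ span $V$ and the $\bar m_j$ are lines of $F^n/V$ good for the induced restrictions. By Proposition \ref{prop:CM1} (and Proposition \ref{thm:OS and order}) for the quotient, these generate. Lift each $\bar m_j$ to a line $m_j$ with $m_j/J$ chosen so that the hyperplanes spanned avoid containing $T$; coprimality of $I$ and $J$ makes this choice possible independently of the $I$-conditions. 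Then the class $[\ell_1,\dots,\ell_d,m_1,\dots,m_{n-d}]$ is a cycle in $X$ (it lies in ${}_TT_n(F)$, since no hyperplane of it contains $T$). Its boundary contribution at $V$, i.e.\ its image under the connecting map, is the chosen product class up to sign. If the lifting fails for some generators, the fallback is the Whitney-type spectral sequence of Proposition \ref{prop:whitney}: its $E^1$ page is concentrated in one row, because all intervals $(\hat 0,M)$ were shown above to be spherical, and this gives the vanishing of lower homology directly.
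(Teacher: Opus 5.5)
The links of nonempty simplices are handled exactly as in the paper, and that part is fine. The gap is in the global step: showing that ${}^S_TT_n(F)$ itself is homologically $(n-2)$-spherical.

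\textbf{The main route.} Your deletion argument correctly reduces this to surjectivity of $\tilde H_{n-2}(X)\to\tilde H_{n-3}(\mathrm{Lk}\,V)$, but your sketch does not prove it, for three reasons.
\begin{enumerate}
\item The upper factor of $\mathrm{Lk}\,V$ is the \emph{doubly} restricted complex of $F^n/V$, restricted by $S/(V/I)$ and by the image of $T$. Proposition~\ref{prop:CM1} says nothing about its top homology, and your induction hypothesis gives only sphericity, not generation by apartment classes.
\item The lifted apartment must be a cycle in the \emph{current} complex, not merely in ${}_TT_n(F)$. A vertex spanned by some $m_j$ and a proper subset of the $\ell_i$ can have rank $>\dim V$ without containing $V$. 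If it reduces into $S$ it has already been deleted, so you need at least $m_j/I\not\subset S$.
\item The mod-$J$ choice is only asserted. It is not automatic over small residue fields, because the hyperplanes $\langle \ell_j: j\ne i\rangle/J$ of $V/J$ can cover $V/J$.
\end{enumerate}

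\textbf{The fallback.} It does not repair this. An $E^1$ page concentrated in a single row only makes the spectral sequence degenerate at $E^2$. That identifies $\tilde H_p({}^S_TT_n(F))$ with the homology of the augmented second sequence of Proposition~\ref{prop:whitney} at the rank-$(p+1)$ term. Vanishing below the top is therefore exactly exactness of that sequence, which is equivalent to what you are proving. The paper's proof rests on the same spectral sequence, through the exactness asserted in Proposition~\ref{prop:whitney}. For the doubly restricted poset that exactness needs an independent argument, so it is your deletion route that is worth finishing.

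\textbf{How to finish the deletion route.} No generation statement for the quotient is needed.
\begin{enumerate}
\item Let $X_k$ be ${}_TT_n(F)$ with the bad vertices of rank $>k$ removed. Bad vertices of equal rank are non-adjacent, so $H_*(X_k,X_{k-1})\cong\bigoplus_V \tilde H_{*-1}(T(V)*\Delta(V,\hat1)_P)$.
\item By induction this is concentrated in degree $n-2$. There it is generated by classes $a\otimes z$, where $a=[\ell_1,\dots,\ell_k]$ is an apartment of $V$ unimodular at $J$ and $z$ is an \emph{arbitrary} top cycle of $\Delta(V,\hat1)_P$. For $J$ prime such $a$ generate $\St(V)$, since $R_J$ is a DVR.
\item Fix $t\in T$ with $t\notin V/J$. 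Choose $M$ with $R^n=(V\cap R^n)\oplus(M\cap R^n)$, adjusting the complement mod $J$ so that the $V/J$-component of $t$ in $V/J\oplus M/J$ has all coordinates in the basis $\bar\ell_i$ nonzero.
\item Consider the subspaces $A\oplus B\neq 0,F^n$, with $A$ spanned by a subset of the $\ell_i$ and $B\subseteq M$ satisfying $B\in\{0,M\}$ or $V\oplus B\in P$.
\item Every such subspace other than $V$ lies in $X_{k-1}$.
\begin{itemize}
\item If $B=0$, it is a proper subspace of $V$, hence bad of rank $<k$.
\item If $B\neq 0$, then $(A\oplus B)/I\supseteq B/I\not\subset S$, because $V/I\oplus B/I\not\subset S\supseteq V/I$.
\item For $B\ne M$, $(A\oplus B)/J\subseteq (V\oplus B)/J\not\supset T$.
\item For $B=M$, $(A\oplus M)/J\not\supset T$ by the choice of $M$.
\end{itemize}
\item The order complex of this subposet lies in $X_k$. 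Its link at $V$ is the join of the apartment of $a$ with $\Delta(V,\hat1)_P$.
\item The complement of the open star of $V$ in it is contractible. Quillen's fibre lemma reduces to the elements with $B\neq0$, and these retract onto $\{0\oplus B\}$, which has top element $M$.
\item Hence the subposet carries a cycle of $X_k$ whose image in $H_{n-2}(X_k,X_{k-1})$ is $\pm a\otimes z$, supported only in the $V$-summand. This is the surjectivity you need, and the induction closes.
\end{enumerate}
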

    \begin{proof}
        The spectral sequence computation in the proof of the preceding proposition shows that ${}^S_T T_n(F)$ is homologically spherical of dimension $n-2$. Moreover, one has the identification, general for order complexes of posets, that the link of a simplex corresponding to a flag $M_1 \subset M_2 \subset \ldots \subset M_k$ is the join of order complexes of intervals 
        \[
        \Delta(\hat{0},M_1) \ast \Delta(M_1,M_2) \ast \ldots \ast \Delta(M_k, \hat{1}).
        \]
        As we saw previously, the first order complex is a singly-restricted one; the middle $k-1$ order complexes are actually unrestricted order complexes attached to $M_i/M_{i-1}$ (since $M_{i-1}/I$ is not contained in $S$ and $M_i/J$ does not contain $T$), and the last one is a singly-restricted order complex associated to $F^n/M_k$, consisting of subspaces whose reduction mod $J$ does not contain the image of $T+M_k/J$ in $(R/J)^n/(M_k/J)$, since $M_k$ is not contained in $S$. These factors are al Cohen--Macaulay, so we are done.
    \end{proof}

    We also record the following locally-integral apartment generation result:

    \begin{prop} \label{prop:unimod}
        Assume $R$ is a PID which is either semi--local or a Dedekind domain with a real place, or two non-archimedean places. Then ${}^S\St_n(F)$ and ${}_T\St_n(F)$ are generated by $R$-integral apartment classes.
    \end{prop}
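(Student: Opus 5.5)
By Proposition \ref{prop:delta}, $\delta$ identifies ${}_T\St_n(F)$ with ${}^{T^\perp}\St_n(F)$, sending $[\ell_1,\ldots,\ell_n]\mapsto[\ell_1^\vee,\ldots,\ell_n^\vee]$. If the $\ell_i$ are $R$-unimodular, so are the $\ell_i^\vee$: they are spanned by the dual basis for the standard pairing. It therefore suffices to treat the upward case ${}^S\St_n(F)$. I will bootstrap from unrestricted integral generation, that is \cite{Maa}, \cite{CFP}, and \cite[Theorem B]{Sca} for semi-local rings, by induction on $n$. The case $n=1$ is trivial. For $n=2$, ${}^S\St_2(F)$ is the reduced $H_0$ of the set of lines not reducing into $S$, so it is generated by differences $[\ell]-[\ell']$. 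Each such difference is a sum of integral apartment classes $[\ell,\ell'']$ provided that, for every line, there is a complementary unimodular line avoiding $S$. I will check this by a chain argument in $\mathrm{SL}_2(R)$, using strong approximation or CRT to adjust residues mod $I$.

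For the inductive step, the plan is to imitate the simplicial argument of \cite{CFP}/\cite{Sca} on the restricted complex. First, introduce the complex of \emph{augmented partial bases} of $R^n$, built from lines avoiding $S$ mod $I$ (and partial bases whose spans are not contained in $S$). Second, prove it is highly connected, by the same link/badness-retraction arguments as in the unrestricted case. The new input needed is an \emph{avoidance lemma}: given a unimodular partial basis $v_1,\ldots,v_k$ with $k<n$, the set of $w$ completing it unimodularly, i.e. $w\in v+\langle v_1,\ldots,v_k\rangle$ for a fixed completion direction $v$, contains elements whose reduction mod $I$ is not in $S$. Since $S$ is a proper subspace of $(R/I)^n$ and $R\to R/I$ is surjective, we can modify $w$ by multiples of $I$-translates, and semi-locality or CRT makes this simultaneously compatible with the Euclidean-type size reductions used in \cite{CFP}. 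Third, run the standard spectral sequence comparing the restricted Tits complex with this complex. This shows that the map from integral apartment classes to ${}^S\St_n(F)$ is surjective, with the links in the spectral sequence being restricted Steinberg modules of smaller rank. Those are handled by induction, using the interval analysis in the proof of Proposition \ref{prop:CM2}: lower intervals are restricted, and upper intervals are unrestricted quotients $F^n/M$ once $M\not\subset S$.

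A more economical alternative, which I would try first, uses Proposition \ref{prop:CM1} to reduce to showing that each $F$-apartment class $[\ell_1,\ldots,\ell_n]$ with all $\ell_i\not\subset S$ lies in the span of integral restricted ones. Using the cyclic relation $\sum_i(-1)^i[\ell_1,\ldots,\hat\ell_i,\ldots,\ell_{n+1}]=0$ for an auxiliary line $\ell_{n+1}\not\subset S$, we can replace one line at a time. Following the Manin/Euclidean-algorithm descent of \cite{CFP}, we choose $\ell_{n+1}$ to decrease a badness measure, namely the index of the lattice spanned by the primitive generators. The extra constraint is that $\ell_{n+1}$ must avoid $S$ mod $I$. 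Within the \cite{CFP} descent, $\ell_{n+1}$ is chosen in a coset of an ideal, or among several candidates. Since $I$ is prime and $S$ is proper, at most a proper subspace's worth of residues is forbidden. So some candidate survives, provided the residue field has more than one element's worth of freedom, or by working with the two-place/real-place flexibility.

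The main obstacle is this avoidance step within the descent. The \cite{CFP} argument picks a very specific vector, a nearest-lattice-point reduction, and perturbing it to avoid $S$ may destroy the norm decrease. I expect to handle this by showing the descent can be run modulo $I$-congruence. When $R$ is semi-local, the Tits complex argument of \cite{Sca} is instead local, and avoidance is immediate by lifting from the residue fields.
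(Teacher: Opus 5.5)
Your reduction to the upward case via $\delta$ is correct and is what the paper does. Your main plan also has the right architecture: a complex of partial bases on lines avoiding $S$ modulo $I$, its high connectivity, then the comparison with ${}^ST_n$ as in \cite[\S5.1]{Sca}. However, the step that carries all the content, namely that this restricted complex is Cohen--Macaulay, is only asserted to follow ``by the same arguments as in the unrestricted case''. The one new ingredient you isolate does not do that job.

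\begin{itemize}
\item As stated, your avoidance lemma is false. Changing $w$ by elements of $IR^n$ does not change $\bar w$. So for a fixed direction $v$ the available residues are exactly $\bar v+\langle \bar v_1,\ldots,\bar v_k\rangle$, and this lies inside $S$ whenever $\bar v$ and all $\bar v_i$ do.
\item More importantly, the existence of good completions is not where the difficulty lies. The difficulty is whether the specific retractions and descents of \cite{CFP}, \cite{Sca} can be run inside the good part. You explicitly leave this open, and your cyclic-relation descent stops at the same obstacle.
\item Your base case has a similar defect. To write $[\ell]-[\ell']$ as a sum of integral restricted apartments, you need the graph on good lines, with unimodular pairs as edges, to be \emph{connected}. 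The existence of a good unimodular partner for every line does not imply this, and the $\mathrm{SL}_2(R)$ chain argument is never given.
\item What the comparison of \cite[\S5.1]{Sca} consumes is Cohen--Macaulayness of the restricted partial-bases complex, not generation of smaller restricted Steinberg modules. So your induction on $n$ does not supply the needed input.
\end{itemize}

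The paper does not re-run any descent. It works as follows.
\begin{itemize}
\item ${}^SB_n(R)$ is viewed as the full subcomplex of $B_n(R)$ on good vertices.
\item $B_n(R)$ is Cohen--Macaulay by \cite[Theorem 3.11]{Sca}.
\item Connectivity is transferred to ${}^SB_n(R)$ by a Hatcher--Vogtmann deletion argument with downward induction over links.
\item Purity is checked by replacing each bad $v_j$ in a completed basis by $v_j+v_1$; good plus bad is good because $S$ is a subspace.
\item \cite[\S5.1]{Sca} then finishes the proof.
\end{itemize}

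If you adopt this route, be aware that the deletion lemma is quoted there one degree too strong. \cite[Proposition 2.5]{GRW} says that $(d-\dim\beta-1)$-connected good links of bad simplices make $W\hookrightarrow Z$ only $d$-connected, so $W$ is only $(d-1)$-connected. For instance, take $W=\{a,c\}$ inside the path $a-b-c$ with $d=0$. To get $W$ $d$-connected you need $(d-\dim\beta)$-connected good links. This already fails in rank $2$: the good link of a bad vertex of $B_2(R)$ is a discrete set with more than one point.

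So the transfer does not close as written, and either way a genuine connectivity argument for the restricted complex is needed. That argument is precisely what your proposal is missing. For $R=\Z$, $n=2$ one can do it by hand: after moving $S$ to $\langle e_1\rangle$, one of the two Farey parents of a good cusp $a/b$ is again good, since their denominators sum to $b$.
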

    \begin{proof}
        By duality, it suffices to prove the result for the upward restriction $S$. Let ${}^SB_n(R)$ be the \emph{complex of restricted partial bases}, the simplicial complex of subsets of lines in $R^n$ which form a basis for a free/cofree summand, and such that no line is contained in $S$ modulo $I$. By the argument of \cite[\S5.1]{Sca}, since we already know that ${}^ST_n(R)$ is Cohen--Macaulay, it suffices to show that ${}^SB_n(R)$ is Cohen--Macaulay. This is a full subcomplex (on the vertices corresponding to \emph{good} lines not contained in $S$) of the non-restricted analogue $B:=B_n(R)$ (defined identically but without the $S$ condition) proven Cohen--Macaulay in \cite[Theorem 3.11]{Sca} (noting that semi-local rings satisfy the stability condition there denoted $SR_2$). 
        
        One only has to do a little work to bootstrap from the non-restricted analogue to our desired result. We have the Hatcher--Vogtmann style deletion lemma (stated in the form we need in \cite[Proposition 2.5]{GRW}) saying that if $Z$ is a simplicial complex and $W \subseteq Z$ is a full subcomplex such that $\mathrm{lk}_Z(\beta) \cap W$ is $(d - \dim\beta - 1)$-connected for every simplex $\beta$ of $Z$ with no vertex in $W$, then $Z$ being $d$-connected implies $W$ is $d$-connected. We claim that $\mathrm{lk}_B(\tau) \cap {}^SB_n(R)$ is $(n - \dim\tau - 3)$-connected for every simplex $\tau$ of $B$: if $\tau$ is top-dimensional, there is nothing to check. Proceeding by downwards induction, we apply the deletion lemma with $Z = \mathrm{lk}_B(\tau)$, $W = \mathrm{lk}_B(\tau) \cap {}^SB_n(R)$, and $d = n - \dim\tau - 3$. As $B$ is Cohen--Macaulay of dimension $n-1$, the link $Z$ is Cohen--Macaulay of dimension $n - \dim\tau - 2$, hence $d$-connected. For a bad simplex $\beta$ of $Z$ we have $\mathrm{lk}_Z(\beta) \cap W = \mathrm{lk}_B(\tau * \beta) \cap {}^SB_n(R)$, which is $(d - \dim\beta - 1)$-connected by induction, since $\dim(\tau * \beta) = \dim\tau + \dim\beta + 1$. So $W$ is $d$-connected, and the claim follows.

        It remains to show that ${}^SB_n(R)$ is pure of dimension $n-1$: indeed, if $\{v_1, \dots, v_k\}$ is a simplex, we may extend it to a basis $v_1, \dots, v_n$ of $R^n$, and replace each bad $v_j$, $j > k$, by $v_j + v_1$. Hence ${}^SB_n(R)$ is $(n-1)$-dimensional with each $k$-simplex link pure of dimension $n - k - 2$, and it is $(n-2)$-connected with $(n-k-3)$-connected links. Hence it is $(n-1)$-spherical with $(n-k-2)$-spherical links.
    \end{proof}

    \subsection{Presentations} \label{section:prezi}

    As briefly mentioned in the introduction, there are two common closely-related forms of presentations for Steinberg modules one finds in the literature. We will distinguish the first presentation, originally written down by Ash--Rudolph \cite{AR}, by calling it \emph{circuit-type}. As mentioned before, it was originally written down by Ash--Rudolph without unimodularity stipulations; however, we will impose them here. For us, a \emph{circuit-type presentation} for $\St(R)$ has as generators symbols $[\ell_1,\ldots, \ell_n]$ indexed by $R$-unimodular ordered tuples of lines, with the relations:
    \begin{enumerate}
        \item (antisymmetry) $[\ell_{\sigma(1)},\ldots, \ell_{\sigma(n)}] = (-1)^{\rm{sgn}\,\sigma} [\ell_1,\ldots, \ell_n]$ for any $\sigma \in S_n$,
        \item (``circuit'' relations) for any tuple of $n+1$ lines $\ell_1,\ldots, \ell_{n+1}$ such that any independent sub-$n$-tuple is unimodular,
        \begin{equation} \label{eq:circuit}
        \sum_{i=1}^{n+1} (-1)^i[\ell_1,\ldots, \hat{\ell}_i,\ldots, \ell_{n+1}]=0.
        \end{equation}
        where any symbol with non-independent lines is taken to be zero.
    \end{enumerate}
    
    For $R$ a field (and thus without unimodularity conditions), as mentioned, these generators and relations originate in Ash--Rudolph. However, they did not actually claim that this gives a \emph{presentation}; the exhaustiveness of the relations was made explicit later in \cite{LS} \cite{KaS}. A similar result proven almost contemporaneously by Orlik--Solomon \cite{OS}; see next subsection.

    Bykovskii \cite{By} refined arguments of Ash--Rudolph to give a $\Z$-unimodular apartment presentation with a smaller set of relations. We refer to the following as a ``Bykovskii-type'' presentation of $\St_n(R)$: as generators, we take symbols $[v_1,\ldots, v_n]$ for $R$-unimodular tuples of primitive vectors in $R^n$, subject to the relations:
    \begin{enumerate} \label{def:byk}
        \item (antisymmetry) $[v_{\sigma(1)},\ldots, v_{\sigma(n)}] = (-1)^{\rm{sgn}\,\sigma} [v_1,\ldots, v_n]$ for any $\sigma \in S_n$,
        \item (homogeneity) for any unit $u\in R^\times$, $[uv_1,\ldots, v_n] = [v_1,\ldots, v_n]$, 
        \item (``Bykovskii'' relation)
        \begin{equation} \label{eq:byk}
        [v_1,\ldots, v_n]-[v_1+v_2,v_2,\ldots, v_n] + [v_1+v_2, v_1, v_3,\ldots, v_n] =0.
        \end{equation}
    \end{enumerate}
     Bykovskii proved this for $R=\Z$ originally by refining Ash--Rudolph's work \cite{By}; a new topological proof was given later by Church--Putman \cite{CP}, and generalized to $\Z[i]$ and $\Z[\omega]$ by Kupers-Miller--Patzt--Wilson \cite{KMPW} (who also showed its failure for some Euclidean domains).

    \begin{rem}
        Proposition \ref{thm:OS and order} implies apartment generation and the circuit-type presentation of $\St_n(F)$. As previously remarked, this presentation could therefore already be deduced quite formally from Orlik--Solomon's work \cite{OS}, though they did not write things down in precisely this language. (In particular, the flag algebra $\mathscr{B}$ under shuffle product of loc. cit. is the Whitney homology algebra of \cite[\S7]{Bj} constructed from order complexes of chains in $L(\mathcal{M})$, which we discuss below.)
    \end{rem}

    One immediately sees that if one uses the homogeneity relation to identify primitive vectors with lines in $R^n$, that the Bykovskii relation is a special case of the circuit relation: it is the ``maximally degenerate'' case where there is a triple of dependent vectors, or equivalently where all $n$ by $n$ minors fail to be independent besides three. In fact, conversely, the Bykovskii relation implies the circuit relations, over any $R$:
    \begin{prop} \label{prop:bykcir}
        The $R$-Bykovskii-type and $R$-circuit-type presentations are equivalent, under the natural map of generating symbols
        \[
        [v_1,\ldots, v_n]\mapsto [Rv_1,\ldots, Rv_n].
        \]
    \end{prop}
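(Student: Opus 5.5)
The plan is to construct mutually inverse maps between the two presented modules. First I would check that the assignment $[v_1,\ldots,v_n]\mapsto[Rv_1,\ldots,Rv_n]$ is a bijection on generating symbols modulo homogeneity. A free rank-one summand $\ell\subset R^n$ is $Rv$ for a primitive $v$, and $v$ is unique up to $R^\times$. Moreover, $v_1,\ldots,v_n$ is an $R$-basis exactly when $Rv_1,\ldots,Rv_n$ is a unimodular tuple of lines. Antisymmetry corresponds to antisymmetry, and homogeneity is absorbed by passing to lines.

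Next I would show that each Bykovskii relation maps to a circuit relation. Take the $n+1$ lines spanned by $(v_1+v_2,\,v_1,\,v_2,\,v_3,\ldots,v_n)$. The only independent $n$-subtuples omit one of the first three entries, and each of these is a basis since $v_1+v_2$ is an elementary modification. Up to an overall sign and antisymmetry, the circuit relation \eqref{eq:circuit} is then exactly \eqref{eq:byk}. So the natural map descends to a map from the Bykovskii-type module to the circuit-type module. The inverse map, which sends a line to a chosen primitive generator, is well defined on symbols by homogeneity. It remains to show that every circuit relation holds in the Bykovskii-type module.

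For this, take lines $\ell_1,\ldots,\ell_{n+1}$ with primitive generators $v_i$. After reordering, the unique circuit among them is $v_1,\ldots,v_k$ for some $3\le k\le n+1$, and $w_{k+1},\ldots,w_{n+1}$ denote the remaining vectors. The case $k=2$ gives only degenerate symbols. Only the terms of \eqref{eq:circuit} omitting some $v_i$ with $i\le k$ survive. By hypothesis each $\{v_j\}_{j\ne i,\,j\le k}\cup\{w\}$ is a basis. Writing $v_1=\sum_{j\ge 2}c_jv_j$ in the basis that omits $v_1$, the fact that the tuple omitting $v_2$ is also a basis forces $c_2\in R^\times$, and likewise every $c_j$ is a unit. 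Using homogeneity, I would rescale so that $v_1+\cdots+v_k=0$. The task is then to prove
\[
\sum_{i=1}^k(-1)^i[v_1,\ldots,\hat v_i,\ldots,v_k,w_{k+1},\ldots,w_{n+1}]=0
\]
by induction on $k$. For $k=3$ this is \eqref{eq:byk} with $v_3=-(v_1+v_2)$, after one application of homogeneity by $-1$.

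For the inductive step, set $u=v_1+v_2=-(v_3+\cdots+v_k)$. Every term omitting $v_i$ with $i\ge3$ contains both $v_1$ and $v_2$. Applying \eqref{eq:byk} with antisymmetry rewrites each such term as $[u,v_2,\ldots]-[u,v_1,\ldots]$, with the same remaining entries. The terms containing $u,v_2$, together with the original term omitting $v_1$, form the circuit relation for the $(k-1)$-circuit $u,v_3,\ldots,v_k$, where $v_2$ is absorbed into the $w$'s. The same holds for the terms containing $u,v_1$ together with the term omitting $v_2$. Both relations hold by induction. For the induction hypothesis to apply, I need the relevant tuples to be bases, and this follows because $u$ replaces $v_1$ or $v_2$ by an elementary operation. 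The main obstacle I expect is the sign bookkeeping in this step: checking that the two leftover terms from the inductive relations carry exactly the signs $(-1)^1$ and $(-1)^2$ required by the original relation. I would handle this by fixing the ordering $(u,v_2,v_3,\ldots)$ versus $(v_1,v_2,v_3,\ldots)$ and tracking the parity of the transpositions explicitly.
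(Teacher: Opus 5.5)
Your proposal is correct and follows essentially the paper's proof. Both use the same reduction to the unique circuit, where the coefficients are units and are rescaled away by homogeneity, and the same induction in which $u=v_1+v_2$ rewrites every term containing both $v_1,v_2$ via Bykovskii relations and the rest regroups into the two smaller circuit relations for $u,v_3,\ldots,v_k$, with $v_2$, resp.\ $v_1$, as the extra entry (your normalization $v_1+\cdots+v_k=0$ is only a symmetric repackaging of the paper's $v_{n+1}=v_1+\cdots+v_k$). The signs do close up, and one small slip: for $k=2$ the two terms omitting $v_1$ or $v_2$ are not degenerate but cancel by homogeneity, and if the lines do not span $F^n$ every term vanishes.
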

    \begin{proof}
        From the preceding discussion, we need only show that the Bykovskii-type relations generate all circuit relations: suppose we have an $(n+1)$-tuple of lines $\ell_1,\ldots, \ell_{n+1}$ such that every independent sub-$n$-tuple is unimodular over $R$; pick arbitrarily primitive vectors $v_1,\ldots, v_{n+1}$ generating these lines. Without loss of generality, suppose that the subtuple $v_1,\ldots, v_n$ is independent; then this tuple is also unimodular by assumption. We therefore conclude there is a unique relation of the form
        \[
        v_{n+1}=r_1v_1 + \ldots + r_{n}v_n
        \]
        for $r_1,\ldots, r_n\in R$. In fact, each $r_i$ must be either zero or a unit: for example, if $r_1$ were nonzero and not a unit, this would make $v_2,\ldots, v_{n+1}$ an independent $n$-tuple, and therefore unimodular. But computing the determinant in the unimodular basis given by $v_1,\ldots, v_n$, yields precisely $r_1$, which is therefore a unit.

        Replacing $v_i$ by $r_iv_i$ for $1\le i \le n$ whenever $r_i\ne 0$ (and thus $\in R^\times$) using homogeneity, and permuting if necessary to move all the vectors with nonzero $r_i$ up front, it therefore suffices to show that the circuit relations hold for the tuple $v_1+v_2+\ldots + v_k, v_1,v_2,\ldots, v_n$ for any $2\le k \le n$. 
        
        We do this by strong induction on $k$, which we call the ``arity'' of the relation: for the base case, note that $k=2$ is precisely the Bykovskii relation. Then general $k>2$, the circuit relation for $v_1+\ldots + v_k, v_1,\ldots, v_n$ is the sum of the circuit relations for the $(k-1)$-arity relations coming from the tuples
        \[
            (v_1+v_2)+v_3+\ldots +v_k, v_1+v_2,v_2, v_3, \ldots, v_k, v_{k+1}, \ldots, v_n
            \]
        with sign $+1$, and 
        \[
            (v_1+v_2)+v_3+\ldots +v_k, v_1+v_2, v_1, v_3, \ldots, v_k, v_{k+1}, \ldots, v_n
        \]
        with sign $-1$, with the alternating sum of the $2$-arity relations coming from the tuples
        \[
            (v_1+v_2)+v_3+\ldots +v_k, v_1+v_2, v_1, v_2, \ldots,\hat{v}_i \ldots, v_k, v_{k+1}, \ldots, v_n
        \]
        for $3\le i \le k$, and the $2$-arity relation for the tuple
        \[
        v_1+v_2, v_1,v_2,\ldots, v_n, 
        \]
        which are all of the form assumed in the inductive hypothesis.
    \end{proof}
    In particular, the validity of Bykovskii-type presentations over fields follows immediately from the (known) circuit-type presentations: the only extra difficulties come from integrality. From the classical matroid theory, we can obtain $F$-Bykovskii or circuit-type presentations for the singly-restricted Steinberg modules ${}^{S}\St_n(F)$ and ${}_{T}\St_n(F)$, under fairly mild assumptions:

    \begin{prop} \label{prop:Sbyk}
        For any proper nonzero subspace $S$ modulo $I$ as before such that $R/I$ is not characteristic $2$, or pair of nonzero proper subspaces $S,T$ modulo relatively primes ideals $I,J$ also with residue characteristic different from $2$, the restricted Steinberg modules ${}^{S}\St_n(F)$, ${}_{T}\St_n(F)$ have $F$-Bykovskii-type presentations. Explicitly, in the various cases, these presentations are:
        \begin{itemize}
            \item with generators given by independent tuples $[v_1,\ldots, v_n]$ of vectors in $F$ none of which span a line whose reduction is contained in $S$, which are antisymmetric, homogeneous, and satisfy the Bykovskii relation \eqref{eq:byk} for every tuple $v_1,\ldots, v_n$ for which $v_1+v_2$ is not contained in $S$; 
            \item with generators $[v_1,\ldots, v_n]$ such that no proper subtuple spans a subspace containing $T$ in the reduction, and satisfy \eqref{eq:byk} for any $(n+1)$-tuple for which no independent subtuple of size $\le n-1$ spans a subspace containing $S$ mod $I$.
        \end{itemize}
    \end{prop}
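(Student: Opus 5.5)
The plan is to treat the upward restriction ${}^S\St_n(F)$ first. By construction it is the top homology of the order complex of the geometric lattice of the submatroid ${}^S\mathcal{P}_n(F)$ of the projective matroid, whose ground set is the set of ``good'' lines (lines whose reduction modulo $I$ is not contained in $S$). Proposition \ref{thm:OS and order} identifies ${}^S\St_n(F)$ with the top Orlik--Solomon group $\OS({}^S\mathcal{P}_n(F))_{n}$. Since the submatroid is infinite when $F$ is infinite, I would apply this to each finite submatroid and pass to the filtered colimit, which is harmless since both sides commute with filtered colimits.

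Unwinding the definition of the Orlik--Solomon ideal in top degree gives a circuit-type presentation of ${}^S\St_n(F)$:
\begin{itemize}
\item the generators are antisymmetric symbols on independent $n$-tuples of good lines;
\item the relations are the circuit relations \eqref{eq:circuit} for all $(n+1)$-tuples of good lines.
\end{itemize}
A relation in degree $n$ comes from $\partial$ of a circuit wedged with other elements. Circuits of smaller size contribute only relations that reduce, after wedging, to top-degree relations indexed by $(n+1)$-tuples containing a dependent sub-$n$-tuple, with those symbols set to zero. So it suffices to consider $(n+1)$-tuples. There are no unimodularity constraints here, because we work over $F$.

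It then remains to run the argument of Proposition \ref{prop:bykcir} while staying inside good lines. Given a circuit relation on good lines, I would normalize as there to the tuple $v_1+\cdots+v_k, v_1,\ldots,v_n$, where all $v_i$ and $v_1+\cdots+v_k$ span good lines. Homogeneity over $F$ lets us rescale freely; the scaled $v_i$ span the same lines, so goodness is unaffected. To make reductions meaningful, I would choose all representative vectors primitive in the localization $R_{(I)}$, so each has nonzero reduction in $(R/I)^n$.

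The inductive decomposition in Proposition \ref{prop:bykcir} only introduces one new vector, $v_1+v_2$; every auxiliary relation involves just $v_1+v_2$, the $v_i$, and partial sums $(v_1+v_2)+v_3+\cdots+v_k$, the last of which equals the original good vector. Hence it suffices to find, after permuting $v_1,\ldots,v_k$, a pair $i\ne j$ with $v_i+v_j$ good. This is the step I expect to be the main obstacle. When $k=2$ there is nothing to do, since the relation is itself a Bykovskii relation with $v_1+v_2$ good. When $k\ge 3$, suppose every pairwise sum among $v_1,v_2,v_3$ reduces into $S$. Then
\[
(v_1+v_2)+(v_1+v_3)-(v_2+v_3)=2v_1
\]
reduces into $S$, and since $2$ is invertible in $R/I$, so does $v_1$, which is a contradiction. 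The delicate point is the primitivity bookkeeping: the coefficients $r_i$ from Proposition \ref{prop:bykcir} need not be $I$-adic units, so after rescaling I would re-primitivize each vector before reducing. This changes the $v_i$ only by $I$-adic units, except where a coefficient is divisible by $I$. In that case I would use the freedom to choose which pairs and which signs $v_i\pm v_j$ to combine, again relying on characteristic $\neq 2$. Granting this, strong induction on the arity $k$ expresses every circuit relation on good lines through Bykovskii relations whose intermediate vector $v_1+v_2$ is good. This gives exactly the first bulleted presentation.

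For ${}_T\St_n(F)$, I would apply the duality of Proposition \ref{prop:delta}. This identifies ${}_T\St_n(F)$ with ${}^{T^\perp}\St_n(F)$ via \eqref{eq:apartdelta}, so the presentation just obtained transports across. One then checks that the dual-basis operation $[v_1,\ldots,v_n]\mapsto[v_1^\vee,\ldots,v_n^\vee]$ carries Bykovskii relations to Bykovskii relations up to sign and homogeneity. This holds because the dual basis of $(v_1+v_2,v_2,\ldots)$ is $(v_1^\vee, v_2^\vee-v_1^\vee,\ldots)$, which is an elementary substitution of the same shape. Finally, the goodness condition ``line not in $T^\perp$'' translates into ``the spanned hyperplane does not contain $T$'', which yields the second bulleted description, including the constraint on the $(n+1)$-tuples.
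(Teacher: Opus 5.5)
Your route is the paper's. The Orlik--Solomon identification gives a circuit-type presentation of ${}^S\St_n(F)$ on good lines. The arity induction of Proposition \ref{prop:bykcir} is then rerun after reordering so that $v_1+v_2$ spans a good line; this is the only place $\mathrm{char}\,R/I\neq 2$ enters. Finally, ${}_T\St_n(F)$ is obtained by transporting along $\delta$, using the same dual-basis computation $(v_1+v_2,v_2,\ldots)\mapsto(v_1^\vee,v_2^\vee-v_1^\vee,\ldots)$. You derive the contradiction from the goodness of $v_1$ itself, whereas the paper derives it from the goodness of $v_1+\cdots+v_k$. Your version is fine, and it avoids needing the total sum to be primitive.

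The one step you leave as ``granted'' is a real, if small, hole, and the remedy you sketch does not work. You have no freedom to combine $v_i-v_j$ instead of $v_i+v_j$: the arity reduction needs the new vector plus the remaining $v$'s to equal the given $v_{n+1}$. Also, re-primitivizing the $v_i$ individually destroys the relation $v_{n+1}=v_1+\cdots+v_k$. What closes the gap is a valuation dichotomy. Write $\mu(v)$ for the minimal $I$-adic valuation of the coordinates of $v$, and $\pi$ for a uniformizer of $R_{(I)}$. Then $\langle v\rangle$ is good if and only if the reduction of $\pi^{-\mu(v)}v$ is not in $S$. There are two cases.

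\emph{Unequal valuations.} Suppose $\mu(v_i)<\mu(v_j)$ for two of $v_1,v_2,v_3$. Then $\pi^{-\mu(v_i)}(v_i+v_j)\equiv\pi^{-\mu(v_i)}v_i \pmod{\pi}$, so $\langle v_i+v_j\rangle$ is automatically good.

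\emph{Equal valuations.} Multiply all three vectors by one common scalar to make them primitive; this changes none of the lines $\langle v_i+v_j\rangle$. Badness of $\langle v_i+v_j\rangle$ now forces $\bar v_i+\bar v_j\in S$ as vectors. When $v_i+v_j$ fails to be primitive this holds trivially, since then $\bar v_i+\bar v_j=0$. Your identity $(v_1+v_2)+(v_1+v_3)-(v_2+v_3)=2v_1$ then gives $\bar v_1\in S$, contradicting the goodness of $v_1$.

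With this in place, your strong induction on arity goes through verbatim. The paper's own one-line justification is likewise phrased at the level of vectors and passes over this normalization, so you are not missing anything it supplies.
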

    \begin{proof}
        We begin with ${}^S\St_n(F)$. Proposition \ref{thm:OS and order} implies that ${}^S\St_n(F) \cong H_{n-2}({}^ST_n(F))$ has a circuit-type presentation over $F$: i.e., for antisymmetric generators 
        $[\ell_1,\ldots, \ell_n]$ on tuples of lines not contained in $S$ mod $I$, relations given by vanishing for $F$-dependent tuples and the circuit relation \eqref{eq:circuit} when all lines are not contained in $S$ mod $I$. 
        
        The proof of Proposition \ref{prop:bykcir} can be used to prove that also in this setting, this circuit-type presentation implies the Bykowskii-type presentation in the proposition statement: the reduction to proving a circuit relation to $v_1+\ldots + v_k,v_1,\ldots, v_n$ (for each of these not contained in $S$) is the same, and the intermediate relations used in the inductive step are always valid so long as we choose an ordering of the vectors so that $v_1+v_2$ is not contained in $S$: when $\mathrm{char}\, R/I>2$, this is always possible by reordering, since if every pairwise sum is contained in $S$, then $v_1+\ldots + v_k$ is also contained in $S$.
        
        Interpreting our antisymmetric symbols as formal wedge products, we claim that these circuit relations additively generate the relations in $\OS({}^S\mathcal{P}_n(F))_n$, meaning the circuit presentation over $F$ for ${}^S\St_n(F)$ would follow from Proposition \ref{prop:bykcir}. Indeed, suppose we have a circuit relation coming from a tuple of lines $\ell_0,\ldots, \ell_n$ avoiding $S$ modulo $I$; this is a dependent set, so contains a circuit of ${}^S\mathcal{P}_n(F)$ which we can assume after reordering is $\ell_0,\ldots, \ell_k$ for some $k\le n$. Then the corresponding circuit relation \eqref{eq:circuit} is the wedge product of the relation
        \[
        \partial ([\ell_0] \wedge \ldots \wedge [\ell_k])
        \]
        in degree $k$ of the Orlik--Solomon ideal with $[\ell_{k+1}]\wedge \ldots \wedge [\ell_n]$. Conversely, all the top-degree relations of the Orlik--Solomon algebra are (by definition) generated by wedge products of monomials with boundaries of circuits, and thus are ``circuit type relations'' as we have defined them.

        To obtain the Bykovskii-type presentation over $F$ for ${}_T\St_n(F)$, we recall the duality map
        \[
        \delta: {}^{T^\perp}\St_n(F) \xrightarrow{\sim} {}_T\St_n(F)
        \]
        which can be used to transfer over presentations. Note first that the generating apartment symbols $[\ell_1,\ldots, \ell_n]$ for lines not contained in $T^\perp$ mod $I$ are precisely bijected with symbols such that no span of a proper subtuple is contained in $T$ mod $I$. Furthermore, suppose we have a Bykowskii relation 
        \[
            [v_1,\ldots, v_n] - [v_1+v_2,v_2,\ldots, v_n] + [v_1+v_2,v_1,v_3\ldots, v_n],
        \]
        and let us write $w_i$, $1\le i\le n$ for a generator of the dual basis so that $\langle v_i, w_j\rangle =\delta_{i=j}$ for $1\le i,j\le n$; then $w_1,\ldots, w_n$ is also unimodular since $v_1,\ldots, v_n$ is. Then one computes that the $\delta$-dual of the Bykovskii relation is precisely
        \[
            [w_1,\ldots, w_n] - [w_1,w_2-w_1,w_3\ldots, w_n] + [w_2,w_1-w_2,w_3\ldots, w_n]
        \]
        which, after applying homogeneity and antisymmetry, is also a Bykovskii relation. Relations of the former form for which none of $v_1+v_2,v_1,v_2,\ldots, v_n$ is contained in $T^\perp$ mod $I$ are bijected under in this correspondence with relations of the latter form for which no proper subtuple of $w_1,\ldots, w_n$ spans a subspace containing $T$ mod $I$, completing the proof.
        \end{proof}

    Note that we have a circuit-type presentation for ${}^S\St_n(F)$ independent of the characteristic of $R/I$; the extra assumption is only needed to show this implies a Bykovskii-type presentation. This is needed for the duality argument for ${}_S\St_n(F)$ and the remainder of the proof, since the $\delta$-dual of a circuit presentation is not a circuit presentation unless it is of the ``quadratic'' form of Bykovskii; it may involve up to $\binom{n+1}{2}$ different lines in general. 
    
    \begin{rem}
        The restriction on the characteristic is necessary: for example, consider the matroid ${}^S\mathcal{P}_3(\F_2)$ where $S=\langle e_1+e_2, e_1+e_3\rangle$ for $e_1,e_2,e_3$ a basis. There are no circuits of length $3$ at all, and so no quadratic relations (but nevertheless obvious cubic ones corresponding to the circuit $\{e_1,e_2,e_3,e_1+e_2+e_3\}$). However, cubic circuit relations cannot substitute the quadratic Bykovskii-type relations in our argument, as the dual of a cubic circuit relation is no longer a circuit relation at all.
    \end{rem}

    Our matroid-theoretic methods do not appear to generalize to give unimodular presentations, nor to the doubly-restricted Steinberg modules.

    \section{Partial modular symbols} 

    \subsection{Locally symmetric spaces and the Borel--Serre construction} \label{section:bs}
    We now let $G$ be a reductive group over $\Q$ of $\Q$-rank $r$, $\mathfrak{P}$, $\mathfrak{B}$, and $\mathfrak{M}$ its parabolic, Borel, and maximal parabolic subgroups as before, and recall briefly its associated archimedean symmetric space $X_G$ and its Borel--Serre compactification $\overline{X}_{G}$,\footnote{Usually, this is decorated with a superscript $BS$, but since we consider no other compactifications in this article, we do not bother with this.} a manifold with corners. All of the below material is taken from \cite{BS} originally, though we follow more closely the treatment in \cite[III.5]{BJ}.
 
    We take the covering of $\mathfrak{B}$ given by
    \[
    \mathcal{C}_G:=\{ U_{M}\}_{M\in \mathfrak{M}},\; U_M:=\{B\in \mathfrak{B}: M\supset B\}\subset \mathfrak{B}.
    \]
    Then the Tits building $T_G$ is the simplicial complex given by the ordinary realization of the nerve $|\mathcal{N}(\mathcal{C}_G)|$. When $G$ is the Weil restriction of $\GL_n(F)$ to $\Q$, $T_G$ coincides with our earlier-defined building $T_n(F)$; note that the $F$-rank of a group over $F$ equals the $\Q$-rank of its Weil restriction. 

    Let 
    \[
    X_G:= G(\R) / K_\infty
    \]
    where $K_\infty \subset G(\infty)$ is the maximal compact; this is a smooth manifold which is homogenous for the left action of $G(\R)$, on which $G(\Z)$ acts virtually freely and properly. For each $P\in \mathfrak{P}$ with Levi decomposition $P=M_PN_P$ for $M_P=A_PM_P^0$ the Levi subgroup (with $A_P$ the connected component of the identity in the split central torus of $M_P$) and $N_P$ the unipotent radical, we have an associated horospherical decomposition 
    \[
    X_G \cong N_P \times A_P \times X_P
    \]
    where $X_P=M_P^0/\text{maximal compact}$ is the symmetric space for $M_P$ has a $P$-equivariant geodesic action of $A_P$ \cite[(III.5.29)]{BJ}. Identifying $A_P \cong \R^r_+$ using its simple roots, for $r$ the $\Q$-rank of $P$; denote its closure in $\R^r$ by $\overline{A}_P$. Write
    \[
    X(P):= X_{G\infty} \times_{A_P} \overline{A_P}\cong N_P \times X_P \times \overline{A_P}
    \]
    for the \emph{corner} associated to $P$. Denote then $e(P)$, the \emph{Borel--Serre boundary component}, to be the preimage of the point corresponding to the origin in $\overline{A}_P$. We then have a canonical decomposition
    \[
    X(P) = X\cup \bigsqcup_{P\subseteq Q} e(Q),
    \]
    and we can define the Borel--Serre compactification $\overline{X}_{G}$ as the colimit of all these $X(P)$ along all parabolics $P$; note that as sets, 
    \[
    \overline{X}_{G} = X_G \cup \bigsqcup_{P\in \mathfrak{P}} e(P),
    \]
    and that in general, for any $P,Q\in \mathfrak{P}$, $P\subseteq Q$ iff $e(P) \subseteq \overline{e(Q)}$. We then have:
    \begin{thm}[Borel--Serre]
        The compactification $\overline{X}_{G}$ is homotopy equivalent to $X_G$. Further, the natural action of $G(\Q)$ on the manifold with corners $\overline{X}_{G}$ is still virtually free, and the action of $G(\Z)$ is proper. 
    \end{thm}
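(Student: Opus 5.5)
The statement is the Borel--Serre theorem: $\overline{X}_G$ is homotopy equivalent to $X_G$, $G(\Q)$ acts on it virtually freely, and $G(\Z)$ acts properly. I would follow the route of \cite{BS} and \cite[III.5]{BJ}, building everything from the corners $X(P)$.

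\emph{Step 1: the topology.} First I will check that $\overline{X}_G$ is a real-analytic manifold with corners. Each corner $X(P)\cong N_P\times X_P\times\overline{A_P}$ is visibly a manifold with corners, since $\overline{A_P}\cong\R_{\ge0}^r$ via the simple roots, and it contains $X_G$ as the open stratum. For $P\subseteq Q$ I will verify that $X(Q)$ embeds as an open subset of $X(P)$, compatibly with the horospherical decompositions. This comes from the inclusion $A_Q\subseteq A_P$ and the factorisation of the $P$-horospherical coordinates through the $Q$-ones, and it makes the colimit well-defined. Since the geodesic actions are real-analytic, the gluing is analytic.

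\emph{Step 2: homotopy equivalence.} Next I will show $\overline{X}_G$ is homotopy equivalent to its interior $X_G$. For this I want a collar of the boundary. Locally near $e(P)$ the space looks like $e(P)\times\R_{\ge0}^r$. A manifold with corners always has a collar, built by patching these local collars with a partition of unity, so the inclusion of the interior is a homotopy equivalence. A cheaper route, which I would try first, is to deformation retract each $\overline{A_P}$ slightly inward using the geodesic action. The obstacle is that these retractions must agree on overlaps $X(P)\cap X(Q)$. That compatibility is exactly why one uses a partition of unity indexed by the $\Gamma$-orbits of $P$, so the homotopy can also be made equivariant.

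\emph{Step 3: the actions.} $G(\Q)$ acts on $\overline{X}_G$ by sending $X(P)$ to $X(gPg^{-1})$ via conjugation of the horospherical data. The stabiliser of a point of $e(P)$ lies in $P(\Q)$, and there it acts through $N_P\times X_P$. Virtual freeness therefore reduces to the same property for arithmetic subgroups of $P$ acting on $N_P\times X_P$, which follows from the Levi decomposition and the case of $X_{M_P}$.

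\emph{Step 4: properness, the main obstacle.} Properness of the $G(\Z)$-action is the hard part. My plan is to use reduction theory: finitely many $\Gamma$-conjugacy classes of parabolics, and Siegel sets $\mathfrak{S}=U\times A_{P,t}\times V$. The key step is to show that the closures $\overline{\mathfrak{S}}$ of Siegel sets in $X(P)$ are compact neighbourhoods covering $\overline{X}_G$ modulo $\Gamma$. Separately, I need the Siegel finiteness property: $\{\gamma:\gamma\overline{\mathfrak{S}}\cap\overline{\mathfrak{S}'}\ne\emptyset\}$ is finite. This extends to the closures because intersections near $e(P)$ are governed by the $\Gamma\cap P$ action, and $\Gamma\cap N_P$ acts cocompactly on $N_P$. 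Together these give properness and Hausdorffness of the quotient.
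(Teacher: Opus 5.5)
The paper gives no argument here beyond citing \cite{BS} and \cite[III.5]{BJ}, plus a one-line heuristic. Your outline (corners, collar, stabilisers inside $P(\Q)$, Siegel sets) is therefore the intended route. As written, however, Step~1 has a genuine hole, and Steps~2 and~4 both depend on it. Gluing the corners $X(P)$ along the open embeddings $X(Q)\hookrightarrow X(P)$ for $P\subseteq Q$ gives a well-defined space locally modelled on manifolds with corners. Nothing in that construction makes it Hausdorff, though: the line with two origins is such a colimit. Hausdorffness of $\overline{X}_G$ is one of the substantive steps in Borel--Serre. If $P$ and $Q$ contain a common parabolic $R$, points of $e(P)$ and $e(Q)$ are already separated inside the single corner $X(R)$, which is open in $\overline{X}_G$. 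The real content is the case where no parabolic lies in both. Since basic neighbourhoods are open and $X_G$ is dense, you must show that no sequence in $X_G$ can have all simple roots of $A_P$ tending to infinity with convergent $N_P$- and $X_P$-coordinates, while also doing the same with respect to $Q$. For $\mathrm{SL}_2$ this is the disjointness of high horoball regions at distinct cusps. In general it is a separation property of (generalised) Siegel sets, i.e.\ reduction-theoretic input that you only bring in at Step~4. Without it, the collar theorem in Step~2 is unavailable, since the partition of unity needs a paracompact Hausdorff space. Likewise, in Step~4 properness yields Hausdorffness of $\Gamma\backslash\overline{X}_G$ only once $\overline{X}_G$ itself is known to be Hausdorff.

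Three smaller points.

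First, the assertion for $G(\Q)$ is false as literally stated: stabilisers $G(\Q)\cap gK_\infty g^{-1}$ can be infinite. For $G=\mathrm{SL}_2$, the rational rotation with cosine $3/5$ has infinite order and fixes $i$, so no finite-index subgroup of $\mathrm{SL}_2(\Q)$ acts freely. What your Step~3 proves is the version for arithmetic $\Gamma$.

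Second, your ``therefore'' in Step~3 hides the key input. The stabiliser in $P(\R)$ of a point of $e(P)$ is a conjugate of $A_PK_P$, which is non-compact. $\Gamma$ meets it in a finite group only because $\chi(\Gamma\cap P)\subseteq\{\pm1\}$ for every rational character $\chi$ of $P$. Hence $\Gamma\cap P\subseteq{}^0P(\R)$, which meets that conjugate only in a conjugate of the compact group $K_P$. This is exactly what fails for $P(\Q)$.

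Third, in Step~4 the cocompactness of $\Gamma\cap N_P$ on $N_P$ gives the covering statement (compactness of the quotient), not finiteness for closures. Finiteness for closures follows instead by enlarging the Siegel sets so that $\overline{\mathfrak{S}}$ lies in the interior of $\overline{\mathfrak{S}_1}$, and using density of $X_G$. If $\gamma\overline{\mathfrak{S}}\cap\overline{\mathfrak{S}'}\neq\emptyset$, then $\gamma\mathfrak{S}_1\cap\mathfrak{S}_1'\neq\emptyset$, and the usual Siegel property applies.
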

    In short, one can check that the action of $G(\Q)$ permutes the boundary components corresponding to various $P$ by conjugation, and the action of each $P$ on its boundary component is free thanks to the presence of the unipotent component $N_P$.

    The connection to the Tits building comes from the following result, which is \cite[(8.2.2)]{BS}:

    \begin{thm}
        Let $Y$ be any space, with a locally finite cover $\{Y_i\}_{i\in I}$ by closed subsets, with nerve $N$. Suppose that $|N|$ is a finite-dimensional simplicial complex, and each $Y_i$ is an \emph{absolute retract} as defined in \cite[\S8.1]{BS}. Then $Y$ and $|N|$ have the same homotopy type.
    \end{thm}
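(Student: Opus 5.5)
The statement is missing a hypothesis that the argument needs. I will assume, as in \cite[\S8.2]{BS}, that every nonempty finite intersection $Y_J:=\bigcap_{i\in J} Y_i$ is an absolute retract, and not merely each $Y_i$. I will also assume $Y$ is metrizable, or at least paracompact Hausdorff, as it is in the application to $\overline{X}_G$.

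The plan is to compare $Y$ and $|N|$ through an intermediate ``bar'' space
\[
Z := \bigcup_{J\in N} Y_J \times |\sigma_J| \subset Y\times |N|,
\]
where $\sigma_J$ is the simplex of the nerve spanned by $J$. I will show that both projections $p\colon Z\to Y$ and $q\colon Z\to |N|$ are homotopy equivalences. The fibres are the right shape for this. The fibre $p^{-1}(y)$ is the full simplex $\sigma_{J(y)}$ on $J(y)=\{i: y\in Y_i\}$, which is finite by local finiteness and is contractible. The fibre of $q$ over an interior point of $\sigma_J$ is $Y_J$, which is contractible because it is an AR.

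\textbf{Step 1: $p$ is a homotopy equivalence.} I take a partition of unity $\{\varphi_i\}$ on $Y$ subordinate to a locally finite open thickening $\{U_i\supset Y_i\}$, chosen so that $U_J:=\bigcap_{i\in J}U_i$ is nonempty only when $Y_J$ is nonempty. Shrinking the $U_i$ is possible because the cover is closed and locally finite. Then $s(y)=(y,\sum_i\varphi_i(y)\,e_i)$ is a section of $p$, provided the thickening is fine enough that $y\in U_i$ forces $i\in J(y)$, or at least keeps $\sum_i\varphi_i(y)e_i$ inside $\sigma_{J(y)}$; this needs care near the boundaries of the $Y_i$. The composite $s\circ p$ is homotopic to the identity by straight-line homotopy in the simplex coordinate, which stays inside $Z$ because $p^{-1}(y)$ is a simplex.

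\textbf{Step 2: $q$ is a homotopy equivalence.} I build a section $t\colon |N|\to Z$ skeleton by skeleton, requiring $t(|\sigma_J|)\subset Y_J\times|\sigma_J|$. On vertices I choose points of $Y_i$. On a simplex $\sigma_J$ the section is already defined on $\partial\sigma_J$, landing in $Y_J$, since every face $J'\subset J$ has $Y_{J'}\supset Y_J$ and, inductively, the map on $\partial\sigma_J$ can be arranged to land in $Y_J$. The AR property of $Y_J$ then extends it over $\sigma_J$. Finite-dimensionality of $|N|$ makes the induction terminate. A homotopy $t\circ q\simeq\mathrm{id}_Z$ is built the same way, by induction over the cells $Y_J\times|\sigma_J|\times[0,1]$ relative to their boundaries, using at each stage that the target $Y_J$ is an AR and that $Y_J\times\partial(\sigma_J\times I)\cup(\cdots)$ is closed.

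\textbf{Main obstacle.} The key difficulty is the point-set topology in Step 1 and in the relative extensions of Step 2. One has to choose the open thickenings so that the partition-of-unity map genuinely lands in the nerve of the closed cover, and check that the relative AR extensions over the non-compact pieces $Y_J\times\sigma_J$ are legitimate; absolute retracts for metrizable spaces extend maps from closed subsets, which is what is needed. If the thickening in Step 1 proves awkward, I would fall back on a Vietoris--Begle or quasifibration argument for $p$, using that its fibres are contractible simplices varying upper-semicontinuously. Combining the two steps gives $Y\simeq Z\simeq|N|$.
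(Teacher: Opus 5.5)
You are right that the statement as printed is false unless every nonempty finite intersection $Y_J$ is an AR: two closed semicircles covering $S^1$ have contractible nerve. The paper itself only cites \cite{BS}. Your intermediate space $Z$ is the standard one, but both steps try to produce strict sections, and for a \emph{closed} cover neither projection has one in general.

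In Step~1, a section of $p$ has the form $y\mapsto(y,c(y))$ with $c(y)\in|\sigma_{J(y)}|$. So each barycentric coordinate $c_i$ vanishes on the open set $Y\setminus Y_i$, hence on its closure. Thus $\{c_i>0\}\subseteq\mathrm{int}\,Y_i$, and these interiors would have to cover $Y$. Your proviso that $y\in U_i$ forces $i\in J(y)$ just says $U_i=Y_i$. For $[0,1]=[0,\tfrac{1}{2}]\cup[\tfrac{1}{2},1]$ there is no section at all, since $c$ must be $e_1$ to the left of $\tfrac{1}{2}$ and $e_2$ to the right. So this is not a matter of care near the boundaries: partitions of unity are an open-cover device. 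Your fallbacks are only named, not carried out, and a Vietoris--Begle argument by itself gives only (co)homology isomorphisms.

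In Step~2 the induction runs backwards. A face $J'\subset J$ has $Y_{J'}\supseteq Y_J$, so nothing puts $t(\partial\sigma_J)$ into $Y_J$. In fact your condition for all $J$ forces $t(e_i)\in\bigcap_{J\ni i}Y_J$, which is usually empty. Take $[0,1]=[0,\tfrac{1}{3}]\cup[\tfrac{1}{3},\tfrac{2}{3}]\cup[\tfrac{2}{3},1]$, where all nonempty intersections are ARs and the nerve is the path $e_1e_2e_3$. The fibres of $q$ over the two open edges are $\{\tfrac{1}{3}\}$ and $\{\tfrac{2}{3}\}$, so by closedness $t(e_2)$ would need $Y$-coordinate both $\tfrac{1}{3}$ and $\tfrac{2}{3}$. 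So $q$ has no section, and the homotopy $t\circ q\simeq\mathrm{id}_Z$ goes with it.

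The missing idea is to glue homotopy equivalences rather than build sections.

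For $q$: $Z$ is the union of the locally finite closed family $Y_J\times|\sigma_J|$. It is built skeleton by skeleton by attaching $Y_J\times|\sigma_J|$ along $Y_J\times\partial|\sigma_J|$, and $|N|$ is built the same way with each $Y_J$ replaced by a point. These pairs have the homotopy extension property and each $Y_J\simeq\ast$. So the gluing lemma and induction over the finitely many skeleta show that $q$ is a homotopy equivalence, as for complexes of spaces in \S4.G of Hatcher's \emph{Algebraic Topology}. No sections are needed.

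For $p$, contractibility is not enough; what is needed is cofibrancy, and this is where the ANR property enters. Each $Y_J\cap\bigcup_{i\notin J}Y_i$ is a locally finite union of closed ANRs with ANR intersections, hence a closed ANR inside the ANR $Y_J$. Its inclusion is therefore a cofibration by Borsuk's homotopy extension theorem. With all these latching inclusions cofibrations, the same inductive gluing shows that $p$ is a homotopy equivalence, exactly as for a CW complex covered by subcomplexes. Here $p$ is the comparison map from the homotopy colimit $Z$ of $J\mapsto Y_J$ to its colimit $Y$.
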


    It turns out that the covering of $\overline{X}_{G}$ by $\{\overline{X(P)}\}_{P\in \mathfrak{P}}$ has this property, and thus $\partial \overline{X}_{G}$ has the homotopy type of $T_G$. We will simply exploit this more generally, for partial compactifications corresponding to subcomplexes of the nerve:

    \begin{cor} \label{cor:bs}
        Let $\mathfrak{S}\subset \mathfrak{P}$ be any set of parabolics which is a ``downward closed'' sub-poset under containment, i.e. such that if $P\in \mathfrak{S}$ and $Q\subseteq P$, then $Q\in\mathfrak{S}$; let $C_{\mathfrak{S}}\subset T_G$ the subcomplex coming from the nerve of the subcover of $\mathfrak{B}$. Then the partial compactification
        \[
        \overline{X}^{\mathfrak{S}}_{G} := X_G \cup \bigsqcup_{P\in \mathfrak{S}} e(P)
        \]
        has the homotopy type of $C_{\mathfrak{S}}$.
    \end{cor}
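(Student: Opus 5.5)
The plan is to apply the Borel--Serre nerve theorem \cite[(8.2.2)]{BS}, quoted above, to a closed cover indexed by $\mathfrak{S}$, in exact parallel with how the full boundary $\partial\overline{X}_G$ is shown to have the homotopy type of $T_G$. One point about reading the statement comes first. The space $X_G\cup\bigsqcup_{P\in\mathfrak{S}}e(P)$ contains the contractible interior $X_G$ as a homotopy equivalent subspace, since the corners $X(P)$ are collared. So the homotopy type of $C_{\mathfrak{S}}$ is carried by its boundary stratum
\[
\partial\overline{X}^{\mathfrak{S}}_G:=\bigsqcup_{P\in\mathfrak{S}}e(P)=\overline{X}^{\mathfrak{S}}_G\setminus X_G .
\]
This is the same convention as in the preceding sentence ``$\partial\overline{X}_G$ has the homotopy type of $T_G$''. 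It is also how the corollary is used in Theorem \ref{thm:a}, through the pair $(\overline{X}^{\mathfrak{C}},\partial\overline{X}^{\mathfrak{C}})$. The work therefore consists of identifying this stratum with the nerve $C_{\mathfrak{S}}$.

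\textbf{Step 1 (the cover).} For each $P\in\mathfrak{S}$, let $Y_P$ be the closure of $e(P)$ inside $\partial\overline{X}^{\mathfrak{S}}_G$. By the incidence relation $e(Q)\subseteq\overline{e(P)}$ iff $Q\subseteq P$, together with downward closedness of $\mathfrak{S}$, we get
\[
Y_P=\bigsqcup_{Q\subseteq P}e(Q).
\]
This is the full Borel--Serre closure of $e(P)$. The sets $Y_P$ for $P$ maximal in $\mathfrak{S}$ (or, with the same nerve up to subdivision, for all $P\in\mathfrak{S}$) form a closed cover of $\partial\overline{X}^{\mathfrak{S}}_G$. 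This cover is locally finite because the cover of $\overline{X}_G$ by closures of corners is.

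\textbf{Step 2 (nerve).} Intersections behave as in the full compactification: $Y_{P_0}\cap\dots\cap Y_{P_k}$ equals $\overline{e(P_0\cap\dots\cap P_k)}$ if $P_0\cap\dots\cap P_k$ is a parabolic subgroup, and is empty otherwise. Such an intersection parabolic lies in $\mathfrak{S}$ automatically, by downward closure. Passing to the Borel subgroups contained in these parabolics identifies the nerve with the nerve of the subcover of $\mathcal{C}_G$ cut out by $\mathfrak{S}$, which is $C_{\mathfrak{S}}$ by definition. This nerve is finite-dimensional, of dimension at most $r-1$.

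\textbf{Step 3 (absolute retracts), the main obstacle.} We must show that each nonempty intersection $\overline{e(P)}$ is an absolute retract in the sense of \cite[\S8.1]{BS}. Here $\overline{e(P)}$ is the Borel--Serre compactification of $e(P)\cong N_P\times X_P$, with respect to the parabolics of the Levi quotient. This makes it a contractible manifold with corners, and Borel--Serre verify exactly this property in \cite[\S8]{BS} when they prove the full statement. I would cite that verification rather than redo it. What needs checking is that restricting to $\mathfrak{S}$ changes nothing. That holds because every closure $\overline{e(P)}$ with $P\in\mathfrak{S}$ lies entirely inside the partial compactification, again by downward closure. Theorem (8.2.2) then gives $\partial\overline{X}^{\mathfrak{S}}_G\simeq|C_{\mathfrak{S}}|$. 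Combined with the collar, which gives $\overline{X}^{\mathfrak{S}}_G\simeq X_G$, this yields the form of the identification used later: for a torsion-free $\Gamma$ preserving $\mathfrak{S}$, the relative homology of the pair $(\overline{X}^{\mathfrak{S}}_G,\partial\overline{X}^{\mathfrak{S}}_G)$ computes the reduced homology of $C_{\mathfrak{S}}$ up to a degree shift, as needed for Theorem \ref{thm:a}.
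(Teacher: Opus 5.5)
Your proposal is correct and is essentially the paper's argument: the paper simply applies the Borel--Serre nerve theorem (8.2.2) to the subfamily of closed boundary faces indexed by $\mathfrak{S}$. You also read the conclusion correctly as a statement about $\partial\overline{X}^{\mathfrak{S}}_G$, since the whole partial compactification is contractible. One point in Step 2 needs tightening: identifying your nerve with a subcover of $\mathcal{C}_G$, and hence with a subcomplex of $T_G$, uses that the maximal elements of $\mathfrak{S}$ are maximal parabolics, as they are in the application (for $\mathfrak{S}$ the set of Borel subgroups with $r\ge 2$ the nerve is a discrete set of chambers, not a subcomplex of $T_G$); also, the cover by all $Y_P$, $P\in\mathfrak{S}$, has a nerve homotopy equivalent to, but not a subdivision of, the one indexed by maximal elements.
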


    \begin{proof}[Proof of Theorem \ref{thm:a}]
        As in the theorem statement, fix a pair of relatively prime ideals $I$ and $J$ of $\mathcal{O}_F$, proper subspaces $S$ and $T$ of $(\mathcal{O}_F/I)^n$ and $(\mathcal{O}_F/J)^n$, and let $\mathfrak{S}$ be the set of parabolics contained in a parabolic stabilizing $S$ and $T$, so that $C_{\mathfrak{S}}$ is identified with ${}^S_T T_n(F)$.
        
        Recall that $\Gamma$ is a torsion-free arithmetic subgroup of $G(\Q)$ fixing $S$ and $T$, $X_G$ is the associated symmetric space, and $\overline{X}^{\mathfrak{S}}_{G}$ the partial compactification associated to the data above; then $\Gamma$ acts freely on both $X_G$ and the boundary $\partial \overline{X}^{\mathfrak{S}}_{G}$. We have the Hochschild--Serre spectral sequence
        \[
        E^2_{p,q} = H_p(\Gamma, H_q(\overline{X}^{\mathfrak{S}}_{G},\partial\overline{X}^{\mathfrak{S}}_{G},\Z)) \Rightarrow H_{p+q}(\Gamma \backslash \overline{X}^{\mathfrak{S}}_{G},\Gamma \backslash \partial \overline{X}^{\mathfrak{S}}_G, \Z).
        \]
        From the long exact sequence of a pair and the contractibility of $X_G$, we deduce
        \[
        H_{n-1}(\Gamma \backslash \overline{X}^{\mathfrak{S}}_{G}, \Gamma \backslash \partial \overline{X}^{\mathfrak{S}}_{G}, \Z) \cong H_{n-2}(\overline{X}^{\mathfrak{S}}_{G},\Z),
        \]
        with every other degree of relative homology vanishing. Then Corollary \ref{cor:bs} implies that the resulting collapse of the spectral sequence on the $E_2$ page yields the identification
        \[
        {}^S_T\St(R)_\Gamma \cong H_{n-1}(\Gamma \backslash \overline{X}^{\mathfrak{S}}_{G}, \Gamma \backslash \partial \overline{X}^{\mathfrak{S}}_{G}, \Z).
        \]
        The rest of the theorem now follows from Proposition \ref{prop:unimod}.
    \end{proof}
    
    \begin{rem} \label{rem:functionfield}
    We expect that there should be similar presentations of partially compactified homology of the Bruhat--Tits building in the function field setting, though there are complications arising from the non-freeness of the group action. Kondo--Yasuda \cite{KY} give one approach without using compactifications, with unavoidable $p$-power torsion discrepancies, which we believe can likely be related to our presentations in the same formal kind of way, but we would prefer genuine partial compactifications - perhaps using the approach to compactifications described in \cite{FKS2}. This would permit a cleaner geometric formulation of the Eisenstein--Drinfeld modular symbols constructed in our joint article \cite{SX} in terms of homology of these buildings. 
    \end{rem}

    \appendix

    \section{Koszulity of VB objects and presentations}

    In this appendix, we briefly recall the Koszulity theory of Steinberg $\rm{VB}$-algebras from \cite[\S4]{MPW}, and relate Koszulity for such algebras to the construction of left resolutions (in particular, unimodular presentations) for Steinberg modules. Much of this material is in loc. cit. and \cite[\S3]{CRR}; a good reference for the classical Koszul duality of graded algebras and graded modules over these algebras (for comparison, as many concepts may be directly ported over) is \cite{PP}. 
    
    Fix a PID $R$, and let $\mathrm{VB}_R$ be the groupoid whose objects are the free $R$-modules of finite rank, and whose morphisms are their $R$-automorphisms. The category $\mathrm{VB}_R$ is symmetric monoidal under the direct sum of $R$-modules, with the natural braiding given by switching summands. As $R$ will be fixed in the rest of the section, we generally omit the subscript.
    
    A $\mathrm{VB}$-\emph{module} is a functor $\mathrm{VB} \to \mathrm{Ab}$; the monoidal structure on both sides endows these functors $\mathrm{Fun}(\mathrm{VB}, \mathrm{Ab})$ with a symmetric monoidal operation called \emph{Day convolution}. Explicitly, this is given by 
    \[
        (S \otimes T)(M) := \bigoplus_{M_1\oplus M_2 = M} S(M_1) \otimes T(M_2)
    \]
    for $\mathrm{VB}$-modules $S$ and $T$, and $M$ a free $R$-module; here, the direct sum is over internal direct sum decompositions. We take the braiding on this symmetric monoidal category given by 
    \[
      \sigma(M):(S\otimes T)(M) \to (T\otimes S)(M),\;\; S(M_1) \otimes T(M_2)\ni x\otimes y \mapsto (-1)^{\mathrm{rk}\,M_1\, \mathrm{rk}\,M_2}y\otimes x \in T(M_2) \otimes S(M_1).
    \]
    A $\mathrm{VB}$-\emph{algebra} is defined to be a monoid in $\mathrm{VB}$-modules, using the symmetric monoidal structure above. We always assume that the trivial $R$-module is assigned $\Z$; this is the analogue of an augmentation in the classical setting.
    
    Using the standard diagrams internal to the category of $\mathrm{VB}$-modules, one has also notions of $\mathrm{VB}$-modules over a $\mathrm{VB}$-algebra, $\mathrm{VB}$-ideals, $\mathrm{VB}$-coalgebras, etc. In the rest of this subsection we freely drop the ``$\mathrm{VB}$'' prefix as being understood. 

    \begin{defn}
    We define the \emph{free lineal} $\mathrm{VB}$-\emph{algebra} $\mathcal{F}$ (or $\mathcal{F}_R$ if $R$ is not clear in the context) generated in degree $1$ by 
        \[
        \mathcal{F}(M) := \bigoplus_{M=\ell_1\oplus \ldots \oplus \ell_d} \Z
        \]
    where the direct sum is over internal direct sum decompositions into lines, and the Day product is given by concatenation of internal direct sum decompositions. 
    \end{defn}

    More generally, one could define free tensor $\mathrm{VB}$-algebras for arbitrary assignments of modules to the lines in $\mathrm{VB}$, but we have no need for this here.

    \begin{defn}
    The \emph{Steinberg $VB$-algebra} $\St_R$ (or just $\St$ if $R$ is clear from context) is the assignment $M\mapsto \St(M)$ on free $R$-modules $M$, with the Day product $\St(V) \otimes \St(W) \to \St(V\oplus W)$ induced by pushforward by the smash maps
    \[
        \Sigma^2(T_\ell(R) \wedge T_{k}(R)) \to  T_{\ell+k}(R)
    \]
    defined simplicially by \cite{GKRW} via concatenation of flags; on homology, this is a braided-commutative product which in particular is given by concatenation on apartment class symbols
    \[
    [\ell_1,\ldots, \ell_a]\otimes [r_1, \ldots, r_b]\mapsto [\ell_1,\ldots, \ell_a, r_1,\ldots, r_b].
    \]
    When apartment classes generate all Steinberg modules over $R$, $\St$ is a quotient of the free lineal algebra $\mathcal{F}$ by an ideal $J$. If Steinberg modules over $R$ have Bykowskii-type presentations, then $\St$ is visibly \emph{quadratic}, meaning that $J$ is generated as a $\mathcal{F}$-module by its values on rank-$2$ $R$-modules.
    \end{defn}

    As with classical graded algebras, the derived tensor product $\Z\otimes^{\mathbb{L}}_A \Z$ may be computed by the reduced bar construction:
    \begin{equation}\label{eq:bar}
    B(A):=\cdots \to A_+^{\otimes 3} \xrightarrow{d_3} A_+^{\otimes 2} \xrightarrow{d_2} A_+ \xrightarrow{d_1} \Z \to 0,
    \end{equation}
    where $A_+$ is the augmentation ideal, i.e. the kernel of the augmentation map $A\to \Z$ which annihilates all the values of $A$ on positive-rank $R$-modules. The bar differential $d_k$ is the alternating sum $d_k = \sum_{i=1}^{k-1} (-1)^{i} m_i$, where $m_i:A_+^{\otimes k}\to A_+^{\otimes (k-1)}$ takes the Day product of the $i$-th and $(i+1)$-th tensor factors. It also carries a natural dg $\mathrm{VB}$-coalgebra structure with comultiplication given by deconcatenation $[a_1|\cdots|a_k]\mapsto \sum_{i=0}^{k} [a_1|\cdots|a_i]\otimes [a_{i+1}|\cdots|a_k]$, and its homology $\Tor^A_*(\Z,\Z)$ inherits a graded coalgebra structure. 
 
    The bar complex \eqref{eq:bar} is bigraded: a term $A_{M_1}\otimes \cdots \otimes A_{M_k}$ has homological degree $k$ and internal degree $\dim_R M_1 +\cdots+\dim_R M_k$; note that $B(A)_{p,q}=0$ whenever the homological degree $p$ is greater than the internal degree $q$. The phenomenon of \emph{Koszulity}, then, as in the classical setting, is the existence of a ``small'' formal model for $\Z\otimes^{\mathbb{L}}_A\Z$ via diagonal concentration:
    \begin{defn}\label{thm:koszul}
    A $\mathrm{VB}$-algebra $A$ is \emph{Koszul} if it satisfies $\Tor_i^A(\Z,\Z)_j = 0$ for $i\ne j$, in which case $\Z\otimes^{\mathbb{L}}_A\Z$ is quasi-isomorphic as a coalgebra to $A^{!`}$, defined as $M\mapsto \Tor_r^A(\Z,\Z)(M)$ for rank-$r$ $M$, via the obvious inclusion of the latter into the former. $A^{!`}$ is referred to as the \emph{Koszul dual $\mathrm{VB}$-coalgebra}.
    \end{defn}

    In particular, one computes immediately from the definition of the bar complex that if $A$ is a quotient of $\mathcal{F}$ by an ideal $J$, then on rank-$2$ modules $M$ we have $A^{!`}(M)=J(M)$. (More generally, there are descriptions in terms of the rank-$2$ values of $J$ of the Koszul dual in any rank, analogous to the descriptions \cite[Chapter I, Proposition 7.2]{PP} in the setting of classical graded algebras, but we do not need these.)
 
    $A^{!`}$ has the structure of a $\mathrm{VB}$-coalgebra, and has a \emph{cobar} construction 
    \[
    \Omega(A^{!`}) :=  \bigoplus_k (A^{!`}_+)^{\otimes k},
    \]
    computing its $\Ext$ groups in a dual story to the one given above, with a differential given by the alternating sum of the coproducts on $A^{!`}$ in each factor, and an algebra structure given by concatenation of tensors. As before, for the internal grading given by $R$-module rank, it is supported only on the locus where homological degree is at most internal degree. Its cohomology computes $R\Hom_{A^{!`}}(\Z,\Z)$, which inherits a graded algebra structure. We have the standard bar-cobar adjunction, as stated in \cite[Proposition 11]{CRR}:

    \begin{prop}
        For a Koszul algebra $A$, the map $\Omega(A^{!`})\to A$ induced by 
        \[
        \bigoplus_{\ell_1 \oplus \ldots \oplus \ell_r=M}A^{!`}(\ell_1) \otimes \ldots \otimes A^{!`}(\ell_r)\twoheadrightarrow A(M)
        \]
        on rank-$r$ $M$, factoring through the diagonally graded quotient of $\Omega(A^{!`})$, is a quasi-isomorphism of algebras, where $A$ is given the zero differential.
    \end{prop}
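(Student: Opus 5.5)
We need to prove that for a Koszul algebra $A$ the map $\Omega(A^{!`})\to A$ is a quasi-isomorphism; this is the standard bar--cobar/Koszul resolution argument, adapted to the $\mathrm{VB}$ setting.

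\emph{Reduction to the diagonal part.} Grade $\Omega(A^{!`})$ by homological degree $k$ (number of tensor factors) and internal degree $q$ (total rank). Because $A^{!`}(M)$ is concentrated in homological degree equal to $\mathrm{rk}\,M$, the cobar construction is supported on $k\le q$. The cobar differential is built from the reduced coproduct of $A^{!`}$, which splits a factor of rank $r$ into two factors of positive rank; it therefore raises $k$ by one and preserves $q$. So for each fixed $M$ we get a finite cochain complex
\[
\Omega(A^{!`})(M)^{1}\to \Omega(A^{!`})(M)^{2}\to\cdots\to\Omega(A^{!`})(M)^{r},\qquad r=\mathrm{rk}\,M,
\]
with top term $\bigoplus_{\ell_1\oplus\cdots\oplus\ell_r=M}A^{!`}(\ell_1)\otimes\cdots\otimes A^{!`}(\ell_r)$. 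Here $A^{!`}(\ell)=A(\ell)$ on lines. The claimed map is the projection onto this top term followed by multiplication. The statement therefore reduces to two claims: the complex is exact in degrees $k<r$, and the cokernel of the last differential is $A(M)$.

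\emph{The cokernel.} The last differential lands in the top term, and its image is the sum of the images of
\[
A^{!`}(\ell_1)\otimes\cdots\otimes A^{!`}(P)\otimes\cdots\otimes A^{!`}(\ell_r)
\]
with $P$ a rank-$2$ summand. Since $A^{!`}(P)=J(P)$, as computed right after Definition \ref{thm:koszul}, these images are exactly the rank-$2$ relations inserted into $\mathcal F(M)$. The cokernel is thus $\mathcal F(M)$ modulo the ideal generated by $J$ in rank $2$. Koszulity forces $A$ to be quadratic: $\Tor_2^A(\Z,\Z)_j=0$ for $j\neq 2$ says there are no minimal relations in rank $>2$, and $\Tor_1$ concentrated in rank $1$ says $A$ is generated by lines. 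Hence the cokernel is $A(M)$.

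\emph{Exactness below the top.} Filter the bar--cobar composite. Since $A$ is Koszul, the inclusion $A^{!`}\hookrightarrow B(A)$ is a quasi-isomorphism of dg coalgebras. Cobar takes this to a quasi-isomorphism $\Omega(A^{!`})\to\Omega B(A)$, and the counit $\Omega B(A)\to A$ is a quasi-isomorphism. Both statements are justified by filtering by internal degree (rank), which is bounded in each evaluation at $M$ because $k\le q=\mathrm{rk}\,M$. A comparison of spectral sequences then works exactly as in the classical case (\cite{PP}, or \cite[Proposition 11]{CRR}). One must check that the Day convolution is exact in each variable, which holds since it is a direct sum of tensor products of abelian groups. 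If some $A(\ell)$ or $A^{!`}(M)$ fails to be $\Z$-flat, one needs a Künneth argument; here $A(\ell)=\Z$, and $A^{!`}$ is a subquotient of a free complex, so I would handle flatness via the bar filtration. The composite with the counit agrees with the map in the statement, so the map is a quasi-isomorphism.

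\emph{Main obstacle.} I expect the main difficulty to be showing that cobar preserves the quasi-isomorphism $A^{!`}\to B(A)$. Classically this needs a connectivity or filtration hypothesis, and here it must be supplied by rank grading and the symmetric-monoidal signs of the braiding. I would first carry out the filtration argument by the number of tensor factors evaluated at a fixed $M$, which is finite. This reduces the claim to the Künneth isomorphism for Day convolution. The Koszul sign conventions in the braiding also need to match those of the bar and cobar differentials.
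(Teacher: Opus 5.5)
Your proposal is correct and is essentially the paper's argument: the paper cites \cite[Proposition 11]{CRR}, and in the proof of Corollary \ref{cor:koszprezi} it factors the map as $\Omega(A^{!`})\to\Omega(B(A))\to A$, where the first arrow is a quasi-isomorphism because Koszulity makes $A^{!`}\to B(A)$ one, and the second is the formal bar--cobar counit. Your separate quadraticity argument for the cokernel is already covered by this factorization, and the flatness you need for the K\"unneth step does hold for $\St$: each $\St(M)$ is free abelian, and $A^{!`}(M)=\ker d_r$ is a subgroup (not merely a subquotient) of a free abelian group.
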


    Concretely, this yields the following left resolution of $A$ in terms of $A^{!`}$:

    \begin{cor}
        If $A$ is Koszul, then for any rank-$r$ $R$-module $M$, the piece of the cobar complex
        \begin{equation} \label{eq:resolution}
            A^{!`}(M) \to \ldots \to \bigoplus_{
        V_1 \oplus \ldots \oplus V_{r-1}=M}A^{!`}(V_1) \otimes \ldots \otimes A^{!`}(V_{r-1})\to \bigoplus_{
        V_1 \oplus \ldots \oplus V_r=M}A^{!`}(V_1) \otimes \ldots \otimes A^{!`}(V_r) \to A(M) \to 0
        \end{equation}
        is a resolution of $A(M)$.
    \end{cor}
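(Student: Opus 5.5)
The plan is to deduce the corollary from the preceding proposition by restricting the quasi-isomorphism $\Omega(A^{!`})\to A$ to a single internal degree and a single module $M$. Both sides are bigraded by homological degree and internal degree (the rank of $M$). On the target, $A$ carries the zero differential, and its rank-$r$ piece $A(M)$ sits in homological degree $r$. This is the diagonal, since $A$ is generated in degree $1$ by lines and the cobar algebra map sends $r$-fold tensors of degree-one pieces to products.

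Fix $M$ of rank $r$ and look at the internal-degree-$r$, $M$-component of $\Omega(A^{!`})$. This is
\[
\bigoplus_{k\ge 1}\ \bigoplus_{V_1\oplus\cdots\oplus V_k=M} A^{!`}(V_1)\otimes\cdots\otimes A^{!`}(V_k),
\]
with $V_i$ nonzero. The tensor power $k$ is the number of cobar factors, and the cobar differential splits one factor via the coproduct. It therefore raises $k$ by one, which is the direction of the arrows in \eqref{eq:resolution}. The $k=1$ term is $A^{!`}(M)$ and the $k=r$ term is the sum over decompositions into lines. Since each $V_i$ is nonzero, $k\le r$, so the complex is bounded exactly as displayed.

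The proposition says the map to $A(M)$, which is nonzero only on the $k=r$ term and there is the product map, is a quasi-isomorphism. Here I need the grading convention to match: the map factors through the diagonal quotient and $A(M)$ is placed in the degree $k=r$. Hence the cohomology of the displayed complex (without the augmentation) is $A(M)$ in the last spot and zero elsewhere. Concretely:

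1. Exactness at the $k<r$ spots holds because cohomology vanishes there.
2. Surjectivity onto $A(M)$ and the identification of the kernel at spot $r$ with the image from spot $r-1$ follow because cohomology at spot $r$ maps isomorphically to $A(M)$.

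Appending $A(M)\to 0$ then gives exactness everywhere. Exactness at $A^{!`}(M)$ on the left is not claimed, since the complex is stated as starting there rather than with a $0\to$.

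The main obstacle is bookkeeping of gradings and signs. One needs to confirm that the cobar differential indeed goes from $k$ factors to $k+1$ factors, which it does because it is built from coproducts. One also needs to confirm that the quasi-isomorphism respects the internal grading and the $M$-component, which holds because Day convolution is compatible with evaluation on $M$ via direct sum decompositions. Once that is checked, the statement is just the $M$-component of the proposition.
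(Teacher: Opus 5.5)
Your proposal is correct and matches the paper. The paper gives no separate proof of this corollary; it obtains it exactly as you do, by evaluating the quasi-isomorphism $\Omega(A^{!`})\to A$ of the preceding proposition on a fixed rank-$r$ module $M$ and reading off the cohomology of the resulting bounded complex. One small point: for $r\ge 2$, your item 1 applied at $k=1$ already shows the leftmost map out of $A^{!`}(M)$ is injective, so you do not need to disclaim exactness there.
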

    Note in particular that if $M$ is a quotient of $\mathcal{F}$, the last part of the above resolution is precisely the quadratic presentation of $M$: the last resolution term is identified with $\mathcal{F}(M)$, and each summand in the second to last term
    \[
    \bigoplus_{
        V_1 \oplus \ldots \oplus V_{r-1}=M}A^{!`}(V_1) \otimes \ldots \otimes A^{!`}(V_r)
    \]
    precisely contributes those rank-$2$ relations $J(V_i)$ coming from the unique $V_i$ which is rank $2$ instead of $1$; altogether, the image of the whole sum is the span of all relations induced from ``quadratic''/rank-$2$ relations.
    
    The above story breaks down cleanly by homological degree: if we refer to concentration of $\Tor_d^A(\Z,\Z)$ up to internal degree $d$ as ``$d$-Koszulity'', which is equivalent to $d$-connectivity of the map $\Omega(A^{!`})\to A$.
    \begin{cor} \label{cor:koszprezi}
        An algebra $A$ is $d$-Koszul if and only if the complex \eqref{eq:resolution} is exact at the last $d$ places (i.e. $1$-Koszulity is equivalent to surjectivity of the last map, etc.). 
    \end{cor}
    \begin{proof}
        For formal reasons, the looping-delooping map $\Omega(B(A))\to A$ is always a quasi-isomorphism; thus, the homology of the cone of $\Omega(A^{!`})\to A$ (which we need to show is $(d-1)$-connected) is identified with the homology of the map $\Omega(A^{!`})\to \Omega(B(A))$ induced by applying the cobar construction to the tautological map $A^{!`}\to B(A)$. By definition, $d$-Koszulity is equivalent to $d$-connectivity of this map, and the cobar construction lowers connectivity by at most $1$, from which the result follows.
    \end{proof}
    Thus, $1$-Koszulity is equivalent to generation in rank-$1$ (i.e. by apartment classes, as we called them in the Steinberg setting), $2$-Koszulity is equivalent to quadratic presentation, etc. just as in the classical setting.

    \begin{rem}
        Bykovskii-type presentations for Steinberg modules for $R$-modules imply Koszulity up to degree $2$, but the converse is not true: the Bykovskii relation is in particular a \emph{triangular} relation in a rank-$2$ $R$-module, involving only three lines. Rather, $2$-Koszulity is instead equivalent to an affirmative answer to \cite[Question 5.2]{KMPW}, which asks whether the rank-$2$ relations generate all Steinberg relations independently of whether $3$-cycles generate the rank-$2$ relations.
    \end{rem}

    \begin{rem}
        As in \cite[\S2]{CRR}, if we pass to rational coefficients, the Koszul duality of the commutative and Lie operads means that we have a smaller Chevalley--Eilenberg cobar resolution attached to the dual \emph{Lie} coalgebra of $A$ (arising by the Milnor--Moore theorem as the indecomposables of the Hopf algebra $A^{!`}$): concretely, for example, this means the degree-$2$ part of the cobar resolution will not need to package all the extraneous information about the anticommutativity of every pair of elements of $A$. We are interested in the integral structure, so do not permit ourselves this reduction.
    \end{rem}

    \begin{rem}
        The Koszulity of $\St$ for $R$ a field is closely related (and strictly weaker than) the Koszulity of the Orlik--Solomon algebras for the full projective matroid of every rank for $R$. The latter statement was proven by Yuzvinsky \cite{Yuz} by a general method of producing a so-called PBW basis for Orlik--Solomon algebras of certain matroids, using the upper-triangular basis of Steinberg modules adapted to a flag. (The original proof was stated only for rationalized Steinberg modules and for $R$ a finite field, but the PBW basis method works also for $\Z$-coefficients and infinite fields.)
    \end{rem}
    
    \printbibliography
\end{document}